\newcommand{\runninghead}[1]{\gdef\RH{#1}}\newcommand{\RH}{}
\newcommand{\msc}[1]{\gdef\MSC{#1}}\newcommand{\MSC}{}
\renewcommand{\title}[1]{\gdef\TT{#1}}\newcommand{\TT}{}
\renewcommand{\author}[1]{\gdef\AU{#1}}\newcommand{\AU}{}
\newcommand{\address}[1]{\gdef\ADR{#1}}\newcommand{\ADR}{}
\renewcommand{\date}[1]{\gdef\DD{#1}}\newcommand{\DD}{}
\newcommand{\version}[1]{\gdef\VER{#1}}\newcommand{\VER}{}
\renewcommand{\maketitle}{
\par \noindent {\footnotesize \upshape Running head: \RH \hfill \DD \par
\noindent Math.\ Subj.\ Class. (2000): \MSC \hfill\textbf{\VER}} 
\vspace{3cm}
\begin{center} {\LARGE \normalfont \TT} \par \medskip \AU \end{center} \vspace{1cm}
}
\newcommand{\finalinfo}{
\bigskip \noindent {\small \upshape \ADR} }
\def\@seccntformat#1{\csname the#1\endcsname. \ }
\numberwithin{equation}{section}
\theoremstyle{plain}
\newtheorem{introprop}{Proposition}
\newtheorem{introthm}[introprop]{Theorem}
\theoremstyle{plain}
\newtheorem{thm}{Theorem}[section]
\newtheorem{prop}[thm]{Proposition}
\newtheorem{lem}[thm]{Lemma}
\theoremstyle{definition}
\newtheorem{df}[thm]{Definition}
\newtheorem{rem}[thm]{Remark}
\newtheorem{se}[thm]{}
\def\Z{\mathbf{Z}}
\def\R{\mathbf{R}}
\def\C{\mathbf{C}}
\def\P{\mathbf{P}}
\def\til#1{\widetilde{#1}}
\def\Aut{\mathop{\mathrm{Aut}}\nolimits}
\def\tr{\mathop{\mathrm{tr}}\nolimits}
\def\PGL{\mathop{\mathrm{PGL}}\nolimits}
\def\PSL{\mathop{\mathrm{PSL}}\nolimits}
\def\cS{{\mathcal S}}
\begin{document}

\runninghead{Spectral triples for Riemann surfaces}
\version{version 1.2}
\date{\today} 
\msc{ 20H10,  57S30, 58B34}
\title{Zeta functions that hear the shape \\[1mm]  of a Riemann surface}
\author{\textit{by} Gunther Cornelissen \textit{and} Matilde Marcolli}
 
\address{(gc) Mathematisch Instituut, Universiteit Utrecht, Postbus
80.010, 3508 TA Utrecht, Nederland,\\ email: {cornelis@math.uu.nl}  

\medskip

\noindent (mm) Max-Planck-Institut f\"ur Mathematik, Vivatsgasse 7,
53111 Bonn, Deutschland, \\ email: {marcolli@mpim-bonn.mpg.de}} 
\maketitle

\begin{abstract} 
\noindent To a compact hyperbolic Riemann surface, we associate a
finitely summable spectral triple whose underlying topological space
is the limit set of a corresponding Schottky group,  and whose
``Riemannian'' aspect (Hilbert space and Dirac operator) encode the
boundary action through its Patterson-Sullivan measure. We prove that
the ergodic rigidity theorem for this boundary action implies that the
zeta functions of the spectral triple suffice to characterize the
(anti-)conformal isomorphism class of the corresponding Riemann
surface. Thus, you can hear the shape of a Riemann surface, by
listening to a suitable spectral triple. 
\end{abstract}

\section*{Introduction} 

Let $X$ denote a compact Riemann surface of genus $g \geq 2$. By the
retrosection theorem of Koebe-Courant (e.g., \cite{Bers}) $X$ can be
represented by a Schottky group $\Gamma$: we can write $$X=\Gamma
\backslash(\P^1(\C)-\Lambda_\Gamma),$$ where $\Lambda=\Lambda_\Gamma$
is the set of limit points of $\Gamma$ and $\Gamma$ is a free group of
rank $g$, discrete in $\PGL(2,\C)$. In complex analysis, it is well
known that the dynamics of the action of $\Gamma$ on the limit set
endowed with Patterson-Sullivan measure encodes a lot about the
structure of the Riemann surface. Our purpose is to show that this
action can be conveniently encoded by a notion from non-commutative
geometry, namely a \emph{spectral triple} (\cite{ConnesLMP}) which
provides the non-commutative analogue of a Riemannian manifold. As it
will turn out, the spectral triple we will consider is
commutative. But, as has been observed frequently, ``even for
classical spaces, which correspond to commutative algebras, the new
point of view [of noncommutative geometry] will give new tools and
results.'' (Connes, \cite{Connesbook}, p.\ 1).  
We show that the isometry class of the boundary action is encoded in
the \emph{zeta function} formalism of the spectral triple.  

\paragraph{Construction of the spectral triple} At the
topological level, we consider the commutative algebra $A=C(\Lambda)$.
We also need the dense subalgebra $A_\infty=C(\Lambda,\Z)\otimes_\Z
\C$ of locally constant functions on $\Lambda$. 

It might be natural to consider the boundary operator algebra
$C(\Lambda) \rtimes \Gamma$ instead. However, by the non-amenability
of the group $\Gamma$, the hyperfiniteness result of Connes implies
that this algebra does not carry any finitely summable spectral triple
(\cite{Conneshyper}). As we will indicate at the end of the proof of
the main theorem, our construction can be extented to an AF-algebra
that is Morita equivalent to a large subalgebra of the boundary
operator algebra. 

To retrieve the actual conformal structure, we need to make the
operator algebra act on a Hilbert space in a way compatible with a
Dirac operator. The Hilbert space $H$ is a particular
GNS-representation of $A$. Its construction depends on chosing a
state, and we make this choice in such a way that it encodes the
metric action of $\Gamma$ on $\Lambda$, expressed via the
Patterson-Sullivan measure. More specifically, on certain elements of
$A$ related to words in a presentation of $\Gamma$, it gives the
measure of the subset of $\Lambda$ reached from that word in  the
representation of the limit set of $\Gamma$ via word group completion
(Floyd \cite{Floyd}). Finally, the Dirac operator $D$ is composed from
projection operators depending on the word length grading in a
presentation for $\Gamma$. Let $\cS_X$ denote the spectral triple so
constructed (see Section \ref{s1} for details). 

\begin{introprop} \label{tA}
If $X$ is a compact Riemann surface of genus at least $2$, $\cS_X$ is a 1-summable spectral triple.
\end{introprop}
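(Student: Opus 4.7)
The plan is to verify the two defining conditions of a spectral triple together with $1$-summability: namely that $[D,a]$ extends to a bounded operator for every $a \in A_\infty$, that $D$ has compact resolvent, and that $|D|^{-1}$ is trace class on the orthogonal complement of its kernel.

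For the boundedness of the commutators, I would exploit that an element $a \in A_\infty = C(\Lambda,\Z) \otimes_\Z \C$ is locally constant, hence constant on each cylinder subset of $\Lambda$ of some finite depth $N = N(a)$. Under the Floyd identification of $\Lambda$ with the end space of the Cayley graph of $\Gamma$, these cylinders are parametrised by reduced words of $\Gamma$ of length $N$. Since $D$ is assembled from the projections $P_n$ onto the word-length-$n$ components of $H$, and since $a$ acts diagonally with respect to the cylinder orthonormal basis once $n \geq N$, the operator $a$ commutes with each $P_n$ for $n \geq N$. Hence $[D,a]$ is supported on the finite-dimensional subspace carried by words of length less than $N$, and is therefore bounded, with an estimate of the form $\|[D,a]\| \leq C_N \|a\|_\infty$.

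For $1$-summability, I would use the orthogonal decomposition $H = \bigoplus_n H_n$, with $H_n$ the joint eigenspace of $P_n$, so that $\dim H_n$ equals the number of reduced words of length $n$ in $\Gamma$. Since $\Gamma$ is free of rank $g \geq 2$, this dimension is $c_n = 2g(2g-1)^{n-1}$ for $n \geq 1$. If the construction of Section \ref{s1} assigns $|D|$ the eigenvalue $\lambda_n$ on $H_n$, then the summability condition for $p=1$ reads
\begin{equation*}
\mathrm{tr}(|D|^{-1}) \;=\; \sum_{n \geq 1} \frac{c_n}{\lambda_n}
\;=\; \sum_{n \geq 1} \frac{2g(2g-1)^{n-1}}{\lambda_n} \;<\; \infty.
\end{equation*}
This forces the sequence $\lambda_n$ to grow at least geometrically with a rate strictly exceeding $2g-1$; once the prescription for $\lambda_n$ from Section \ref{s1} is inserted, the estimate reduces to a geometric series bound, and the compactness of the resolvent follows a fortiori from $\lambda_n \to \infty$.

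The main obstacle is matching the exponential multiplicity $c_n \sim (2g-1)^n$ against the eigenvalue growth: the grading by word length forces the Dirac operator to scale exponentially in the word-length index, with the critical threshold $\mu > 2g-1$. One pleasant consequence is that this threshold depends only on the combinatorics of the free group, not on the hyperbolic geometry of $X$; the Patterson-Sullivan data enters later, when the zeta functions are actually computed, and not in the summability bookkeeping itself.
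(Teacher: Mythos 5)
Your proposal is correct and follows essentially the paper's route for the commutator bound: an element $a \in A_\infty$ that is locally constant at depth $N$ preserves each $A_n$ for $n \geq N$, hence commutes with the projections $P_n$ (and so with $Q_n = P_n - P_{n-1}$) in that range, so $[D,a]$ reduces to the finite sum $\sum_{i \leq N} \lambda_i [Q_i,a]$ and is bounded. Where you genuinely diverge from the paper is in the summability estimate. You invoke the exact count $\dim A_n = 2g(2g-1)^{n-1}$ of reduced words of length $n$ in $F_g$ and reduce $\mathrm{tr}(|D|^{-1})$ to a geometric series of ratio $(2g-1)^{-2}$, with the pleasant observation that the eigenvalue growth rate must strictly exceed $2g-1$. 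The paper does \emph{not} use the exact dimension formula at this point (it is established only in Section~2); it uses the crude bound $\dim A_n \geq n+1$, so that $(1+\lambda_n^2)^{-1/2}\dim A_n \leq (\dim A_n)^{-2} \leq (n+1)^{-2}$. That is the Antonescu--Christensen estimate, and it buys generality: it shows the exponent $3$ in $\lambda_n = (\dim A_n)^3$ yields $1$-summability for \emph{any} infinite-dimensional AF filtration, independent of the exponential dimension growth specific to free groups. Your version buys the sharper, free-group-specific threshold. One notational caution: the $\lambda_n$-eigenspace of $|D|$ is $H_n \ominus H_{n-1}$, of dimension $2g(2g-1)^{n-2}(2g-2)$, not $\dim H_n = 2g(2g-1)^{n-1}$; you are over-counting the multiplicities, but harmlessly so, since you only need an upper bound on the trace.
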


\paragraph{Zeta function rigidity} The theory of finitely summable
spectral triples comes with an elegant framework of zeta functions:
for any $a \in A_\infty$, one has the spectral zeta function
$\zeta_{X,a}(s):=\mathrm{tr} (a|D|^s)$. 

\begin{introthm}\label{tB}
If $X_1$ and $X_2$ are compact Riemann surfaces of genus at least $2$,
such that  
$\zeta_{X_1,a}(s)=\zeta_{X_2,a}(s)$ for all $a \in A_\infty$, then
$X_1$ and $X_2$ are conformally or anti-conformally equivalent as
Riemann surfaces. In particular, the spectral triple $\cS_X$ encodes
the conformal/anticonformal isomorphism type of $X$. \end{introthm}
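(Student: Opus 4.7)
The plan is three-fold: first, decode the family $\{\zeta_{X,a}(s)\}_{a \in A_\infty}$ into the Patterson--Sullivan measures of cylinder sets in $\Lambda_\Gamma$; second, use the identification of both limit sets with the Floyd boundary of the abstract free group $F_g$ of rank $g$ to obtain a $\Gamma$-equivariant boundary isomorphism between $\Lambda_{\Gamma_1}$ and $\Lambda_{\Gamma_2}$; third, invoke Sullivan--Tukia ergodic rigidity to promote this boundary map to an (anti-)M\"obius transformation, which then descends to the desired (anti-)conformal equivalence $X_1 \cong X_2$.

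For the first step I would analyse $\mathrm{tr}(a|D|^s)$ using the explicit word-length grading of $D$. If $P_n$ denotes the projection onto the span of basis vectors indexed by reduced words of length $n$ in a Schottky presentation of $\Gamma$, then $|D|^s$ is diagonal in the word basis with a single eigenvalue (a simple function of $n$) on the range of each $P_n$. By construction, the GNS inner product on $H$ is chosen so that the squared norm of the vector attached to a reduced word $w$ equals, up to a normalisation, the Patterson--Sullivan measure $\mu(U_w)$ of the cylinder $U_w \subset \Lambda_\Gamma$ labelled by $w$. Plugging in $a = \chi_{U_w} \in A_\infty$, the zeta function $\zeta_{X,\chi_{U_w}}(s)$ becomes a Dirichlet-type series indexed by $g \in \Gamma$, whose coefficients are the measures $\mu(g \cdot U_w)$ weighted by the word length of $g$. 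Inverting this series by taking residues at the poles, or by extracting individual Dirichlet coefficients from the known spectrum of $|D|$, one recovers the whole collection $\{\mu(g \cdot U_w) : g \in \Gamma,\ w \text{ reduced}\}$. Since the characteristic functions of cylinders span $A_\infty$ and the cylinders generate the Borel $\sigma$-algebra of $\Lambda_\Gamma$, the zeta data are equivalent to knowing the Patterson--Sullivan dynamical system $(\Gamma, \Lambda_\Gamma, \mu)$.

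In the second step, both $\Lambda_{\Gamma_i}$ are identified with the Floyd boundary $\partial F_g$ through the respective Schottky presentations. The hypothesis $\zeta_{X_1,a}(s) = \zeta_{X_2,a}(s)$ for all $a \in A_\infty$ then forces the pulled-back Patterson--Sullivan measures on $\partial F_g$ to agree, and it forces the identification to be $F_g$-equivariant, since the cylinder labels carry the word-structure of $F_g$ intrinsically. This yields a homeomorphism $\varphi : \Lambda_{\Gamma_1} \to \Lambda_{\Gamma_2}$ that is measure-preserving and intertwines the actions of $\Gamma_1$ and $\Gamma_2$ under an isomorphism of abstract groups.

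The main obstacle is the third step: promoting this abstract boundary isomorphism $\varphi$ to the restriction of a M\"obius or anti-M\"obius transformation of $\P^1(\C)$. This is precisely the ergodic rigidity theorem for convex-cocompact Kleinian groups alluded to in the abstract: an equivariant boundary map between limit sets that is absolutely continuous with respect to the Patterson--Sullivan class is the restriction of an element of $\PGL(2,\C)$, possibly precomposed with complex conjugation. Verifying the absolute-continuity hypothesis from the cylinder-measure data reconstructed in the first step is the delicate point, and is where Sullivan--Tukia rigidity has to be applied most carefully. Once $\varphi = \gamma|_{\Lambda_{\Gamma_1}}$ for such a (possibly anti-)M\"obius $\gamma$, the relation $\Gamma_2 = \gamma \Gamma_1 \gamma^{-1}$ descends to a conformal, respectively anti-conformal, isomorphism $X_1 \to X_2$, producing exactly the dichotomy in the statement. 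The promised extension of $\cS_X$ to an AF-algebra Morita equivalent to a large subalgebra of $C(\Lambda)\rtimes\Gamma$ would be carried out as a coda, bypassing Connes' obstruction for the full crossed product while leaving the rigidity argument intact.
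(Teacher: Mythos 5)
Your three-step plan coincides with the paper's actual strategy: (1) decode the zeta data into Patterson--Sullivan measures of cylinder sets, (2) package the Floyd identifications into an equivariant, measure-preserving boundary homeomorphism, (3) invoke ergodic rigidity (the paper uses Yue's Corollary~B plus Schreier--van der Waerden) to upgrade that homeomorphism to an (anti-)M\"obius map. However, your description of step (1) glosses over the central technical difficulty and, as written, would not go through. The characteristic functions $\chi_{\overrightarrow{w}}$ are \emph{not} orthogonal in the GNS Hilbert space: for comparable words one has $\langle \chi_{\overrightarrow{w}_1}|\chi_{\overrightarrow{w}_2}\rangle = \mu(\overrightarrow{\max\{w_1,w_2\}}) \neq 0$. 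So the trace $\tr(a|D|^s)$ cannot be read off as a Dirichlet series whose coefficients are the measures $\mu(g\cdot U_w)$ directly; one must first run a Gram--Schmidt orthonormalisation on each graded piece $H_n \ominus H_{n-1}$, compute the resulting coefficients $c_n(a) = \sum_{w}\langle\Psi_w|a\Psi_w\rangle$, extract them from the Dirichlet series by the identity theorem (the eigenvalues $\lambda_n=(\dim A_n)^3$ are distinct positive integers), and then prove an inductive lemma showing $c_{m-1}(\chi_{\overrightarrow{\eta}}) = \mu(\overrightarrow{\eta})\cdot\kappa$ with $\kappa$ a \emph{nonzero} quantity depending only on measures of shorter words. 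Without this induction (and the nonvanishing of $\kappa$), the measures are not visibly recoverable from the zeta data.

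Two smaller omissions: before the identification $\Lambda_1 \cong \Lambda_2 \cong \partial F_g$ even makes sense, one must first extract the genus equality $g_1 = g_2$ from the single zeta function $\zeta_{X,1}(s)$, which the paper does by an asymptotic argument on the closed-form expression~(\ref{zeta1}); and the rigidity theorem you invoke requires the Schottky group to be Zariski dense in $\PGL(2,\C)$, which the paper verifies in a separate lemma (a noncommutative free subgroup cannot sit in a proper algebraic subgroup of $\PGL(2)$ because low-dimensional connected algebraic groups are solvable). Also, once you establish that the boundary map $\Phi$ satisfies $\mu_2 \circ \Phi^* = \mu_1$ on cylinders, you automatically get that $\Phi$ is measure-preserving, hence absolutely continuous --- so the ``delicate point'' you flag about verifying absolute continuity is in fact immediate from step~(1); the genuine work lies earlier, in extracting the measures.
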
  

In the theorem, equality of zeta functions should be understood as
follows: both the algebras $A_1$ and $A_2$ of $X_1$ and $X_2$,
respectively, have a unit. If the zeta functions for this unit are
equal, we first deduce that the Riemann surfaces have the same
genus. This allows us to identify the algebras $A_1$ and $A_2$, along
with the corresponding dense subalgebras of locally constant
functions. It then makes sense to interpret the expression
$\zeta_{X_1,a}(s)=\zeta_{X_2,a}(s)$. 

About the proof: by the analogue of Fenchel-Nielsen theory (cf.\ Tukia
\cite{Tukia2}), the abstract isomorphism of Schottky groups of $X_1$
and $X_2$ induces a unique homeomorphism of limit sets, equivariant
with respect to the group isomorphism (the \emph{boundary map}). We
show that equality of zeta functions implies that this boundary map is
absolutely continuous. For this, one has to trace through the
representation of the limit set in the sense of Floyd (\cite{Floyd}) to deduce fairly explicit expressions for various zeta functions. We do this by computing traces in an explicit orthonormal basis for $H$.

We then apply an ergodic rigidity theorem for Schottky uniformization
that we deduce from a theorem of Yue (cf.\ \cite{Yue}; from the long
history we also mention the names Mostow, Kuusalo, Bowen, Sullivan,
Tukia).  This says that there are only two alternatives for the
boundary map: either the Patterson-Sullivan measures are mutually
singular with respect to the boundary map, or the map extends to a
continuous automorphism of $\PGL(2,\C)$. Absolute continuity excludes
the first case.  

We will end the paper with a list  of open questions. 

\paragraph{Can you hear the shape of a Riemann surface?} Theorem
\ref{tB} fits into the framework of isospectrality questions, as
coined by I.M.~Gelfand for compact Riemannian manifolds. Vign\'eras (\cite{V1}, \cite{V2})  and Sunada
(\cite{Sunada}) constructed non-isometric surfaces with identical Laplace operator zeta function $\tr(\Delta^s)$ (`isospectral'). The work of Sunada, in particular, transports an idea from algebraic number theory due to Gassmann (\cite{Gassmann}), where the same phenomenon is visible: there exist non-isomorphic algebraic number fields with identical Dedekind zeta function.

 For the specific case of Riemann surfaces, Buser (\cite{Buser}) obtained  a
(finite) upper bound on the number of Riemann surfaces isospectral with a given Riemann surface, depending only on the genus of that surface, and work of Brooks, Gornet and Gustafson (\cite{BGG})
shows this bound is of the correct order of magnitude. 

As for Dirac operators instead of Laplace operators, B\"ar has
constructed non-isometric space forms with the same Dirac spectrum
(\cite{Baer1}).  

For the case of planar domains, the problem of isospectrality was
coined by Bochner, to quote Kac --- quoting Lipman Bers ---  ``can you
hear the shape of a drum?'' (\cite{Kac},  solved by Gordon, Webb and
Wolpert \cite{GWW}). 

In this phrasing, Theorem \ref{tB} says you can
hear the complex analytic type of a compact Riemann surface from
listening to the noncommutative spectra of its associated spectral
triple (that is, to the collection of the associated zeta
functions). A main difference with respect to the classical isospectrality
question is that in this case you do have to listen to $\tr(aD^s)$ for a
dense subset of operators $a \in A$, and the eigenvalues of $D$
themselves are not so interesting. For example, at the unit $1 \in A$, we find the innocent zeta function (cf.\ Formula (\ref{zeta1}))
$$ \zeta_{X,1}(s) = 1 +\frac{2g-2}{2g-1} \cdot \frac{(2g)^{3s+1}}{1-(2g-1)^{3s+1}}. $$
 We note that in the completely
different construction of a  ``conformal'' spectral triple by B\"ar,
the eigenvalues themselves are uninteresting, too (\cite{Baer}). 

Thus, the main point of our discussion is that the ``spectral object'' $\cS_X$ determines the ``conformal object'' $X$, up to complex conjugation.


\section{A spectral triple associated to a Kleinian Schottky group} \label{s1}

\begin{se}The aim of this section is to introduce a finitely summable spectral
triple $\cS_X:=(A,H,D)$ associated to a Schottky group $\Gamma$ that
uniformizes a compact Riemann surface $X$ of genus $g \geq 2$.  Recall
that a spectral triple is a noncommutative analogue of a 
Riemannian spin manifold, 
where $A$ is a $C^*$-algebra, $H$ is a Hilbert space on which
$A$ acts by bounded operators, and $D$ is an unbounded self
adjoint operator on $H$ with compact resolvent $(D-z)^{-1}$ for
$z\notin \R$, and such that the commutators $[D,a]$ are bounded
operators for all $a$ in a dense involutive subalgebra $A_\infty$ of
$A$. Connes has shown \cite{ConnesLMP} that if $(A,H,D)$ arises from a
Riemannian spin manifold, then the distance element is encoded by the
inverse of the Dirac operator.  
\end{se}

\begin{se}Let $\Gamma$ denote a Schottky group of rank $g \geq 2$.  As an abstract group, $\Gamma$ is isomorphic to $F_g$, the free group on $g$ generators. 
We think of $\Gamma$ as being specified by an injective group
homomorphism $\rho  :  F_g \hookrightarrow \PGL(2,\C)$. Let
$\Lambda=\Lambda_\Gamma$ denote the limit set of the action of
$\Gamma$ on $\P^1(\C)$. 
 \end{se}

\begin{se}[Group completion and limit set] We recall what we need from
Floyd's relation between the group completion of $F_g$ and the limit
set $\Lambda$ of $\Gamma$ (\cite{Floyd}).  Let $Y_g$ denote the Cayley
graph (with unordered edges) of $F_g$ for a presentation of $F_g$ in a
fixed alphabet on $g$ letters, and let $\bar{Y}_g$ denote the
completion of the Cayley graph as a metric space for the following
metric. Let $|w|$ denote the reduced word length of a word in the
generators of $F_g$. The edge between two words $w_1$ and $w_2$ is
given length $\min \{ |a|^{-2}, |b|^{-2} \}$ (with $|e|^{-2}:=1$ for
the empty word $e$).  
The \emph{group completion} of $F_g$ is by definition the space
$\bar{F}_g:=\bar{Y}_g-Y_g$. It is a compact metric space. A different
(finite) presentation for $F_g$ leads to a Lipshitz equivalent group
completion. Since $F_g$ has no ``parabolic ends'' in the sense of
Floyd, we have the following: 
\end{se}

\begin{lem}[Floyd, \cite{Floyd}, p.\ 213--217] Given a point $x_0 \in
\P^1(\C)$ and an embedding $\rho  :  F_g \hookrightarrow \PGL(2,\C)$
as above, the following map is a continuous bijection: 
$$ \begin{array}{lccc} \iota_\rho \ : \  & \bar{F}_g & \rightarrow &
\Lambda \\ &  \lim\limits_i w_i & \mapsto & \lim\limits_i
\rho(w_i)(x_0). \end{array} \ \ \ \ \Box $$  
\label{Floyd} \end{lem}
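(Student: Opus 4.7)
The plan is to exploit the specific geometry of Schottky groups in order to match Floyd's completion with $\Lambda$. By the Schottky condition, $\rho$ arises from a configuration of $2g$ pairwise disjoint Jordan disks $D_1,\dots,D_g,D_1',\dots,D_g' \subset \P^1(\C)$, where the generator $a_i$ maps the exterior of $D_i'$ onto the interior of $D_i$. Iterating, every reduced word $w=a_{i_1}\cdots a_{i_n}$ determines a nested disk $D_w \subset D_{a_{i_1}}$, and a standard compactness argument using loxodromicity of the generators shows that $\mathrm{diam}(D_w) \le C\,c^{|w|}$ for some uniform $c<1$. This geometric contraction is the engine of the proof.

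I would first establish well-definedness. Any point of $\bar F_g=\bar Y_g-Y_g$ is represented by a Cauchy sequence $(w_i)$ in $\bar Y_g$ with $|w_i|\to\infty$, and Cauchy-ness in the Floyd metric (edges of length $\min\{|a|^{-2},|b|^{-2}\}$) forces any two such representatives to eventually share arbitrarily long initial segments. Since $\rho(w_i)(x_0) \in D_{w_i}$ and the $D_{w_i}$ form a nested shrinking sequence once the prefixes agree, $\lim_i \rho(w_i)(x_0)$ exists in $\P^1(\C)$, depends only on the Floyd-limit, and lies in $\Lambda$ (as an accumulation point of $\Gamma\cdot x_0$). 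Continuity follows from the same estimate: a neighbourhood basis of $\xi \in \bar F_g$ consists of sets of words extending a common prefix $u$ of arbitrary length, and the image then lies inside the small disk $D_u$. For surjectivity, any $\lambda \in \Lambda$ is a cluster point of $\Gamma\cdot x_0$, so $\lambda = \lim \rho(w_i)(x_0)$ for some word sequence with $|w_i|\to\infty$; by compactness of $\bar F_g$ one extracts a Floyd-convergent subsequence whose limit maps to $\lambda$. For injectivity, two distinct Floyd-limits admit representatives with eventually disagreeing prefixes $u\ne u'$ at some finite level, and the associated disks $D_u, D_{u'}$ are then disjoint by the Schottky property, forcing distinct image limits.

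The main obstacle is really packaged into the comment preceding the statement: for a general Kleinian group one must worry about \emph{parabolic ends}, where distinct Floyd-limits can collapse onto a single limit point and bijectivity fails. Because $F_g$ is freely generated and discrete, this phenomenon does not occur, and the nested-disk picture sketched above is clean. For purely loxodromic classical Schottky groups the geometric contraction is elementary, but for an arbitrary discrete faithful representation into $\PGL(2,\C)$ the existence of suitable Schottky circles realizing the free generators is already nontrivial; at this step I would invoke Floyd's argument (loc.\ cit.) rather than re-derive the disk geometry from scratch.
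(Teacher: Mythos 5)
The paper itself gives no proof of this lemma: it is stated as a citation to Floyd's paper (pages 213--217) and closed with $\Box$. So the relevant comparison is with Floyd's published argument, and with what the paper's context actually requires. Floyd proves a general statement for geometrically finite Kleinian groups (continuous surjection from the group completion onto the limit set, injective away from parabolic fixed points), using the thick--thin decomposition and metric comparison estimates; the Schottky case, with no parabolics, then yields a bijection. Your sketch instead specializes from the outset to the Schottky disk geometry furnished by the retrosection theorem, and this is both correct and genuinely simpler in this special case: the nested-disk picture with uniform exponential contraction of $\mathrm{diam}(D_w)$ in $|w|$ gives well-definedness, continuity on cylinder sets, surjectivity (cluster points of the orbit plus compactness of $\bar F_g$), and injectivity (disjointness of $D_u, D_{u'}$ for incomparable prefixes) all in one package. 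The exponential contraction estimate you cite is indeed standard, since each generator restricted to the closed exterior of its paired disk is a strict spherical contraction onto the complementary disk, uniformly on the compact union of the remaining disks. One small point: your closing caveat about the nontriviality of producing Schottky circles is unnecessary in this setting, since the lemma's hypothesis ``$\rho$ as above'' already refers to a Schottky group as supplied by Koebe--Courant retrosection, so the Jordan disk configuration is part of the data rather than something to be rederived; it is only for a \emph{general} free discrete subgroup of $\PGL(2,\C)$ (not assumed Schottky) that one would need Floyd's more robust machinery.
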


\begin{df} Given a reduced word $w$ in the generators of $F_g$, let
$i(w)$ respectively $t(w)$ denote the initial, respectively terminal
letter of $w$. 
For two reduced words $w$ and $v$ (or $v$ a limit of such), we write $$w \subseteq v \mbox{ if }(\exists w_0)(v=w \cdot w_0) \mbox{ with }t(w) \neq i(w_0)^{-1}.$$
 We write $w \subset v$ if $w \subseteq v$ and $w \neq v$. 

Given $\rho: F_g \rightarrow \PGL(2,\C)$ and a word $w \in F_g$,
define the subset of $\Lambda$ of \emph{ends of $w$ with respect to
$\rho$} to be  
$$ \overrightarrow{w}_\rho := \{ \iota_\rho(v) \ : \ v \in \bar{F}_g \mbox{ and } w \subseteq v \}. $$ 
\end{df}

\begin{lem}
$A=C(\Lambda)$ is the closure of the span of the characteristic
functions $\chi_{\overrightarrow{w}_\rho}$ of the sets
$\overrightarrow{w}_\rho$ for $w \in F_g$.  
\end{lem}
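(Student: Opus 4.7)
The plan is to invoke the Stone--Weierstrass theorem: since $\Lambda$ is compact Hausdorff, any unital, conjugation-closed, point-separating subalgebra of continuous functions is automatically dense in $C(\Lambda)$.

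The first step is to verify that each $\chi_{\overrightarrow{w}_\rho}$ is genuinely continuous, i.e., that $\overrightarrow{w}_\rho$ is clopen in $\Lambda$. By Lemma \ref{Floyd}, $\iota_\rho$ is a continuous bijection between compact Hausdorff spaces and hence a homeomorphism, so it suffices to show that the cylinder $C_w := \{v \in \bar{F}_g : w \subseteq v\}$ is clopen in $\bar{F}_g$. The Cayley graph $Y_g$ of $F_g$ is a $2g$-regular tree, and Floyd's edge-weighting makes edges at combinatorial distance $n$ from the identity have length on the order of $n^{-2}$, so two ends of $Y_g$ lie close in Floyd's metric precisely when they share a long common initial segment. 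Consequently $C_w$ is open, and its complement is the finite union of cylinders $C_{w'}$ over the finitely many reduced words $w'$ of length $|w|$ distinct from $w$, which is therefore also open.

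Second, I would check the Stone--Weierstrass hypotheses. For any two reduced words $w_1, w_2$, either one extends the other (in which case $C_{w_1} \cap C_{w_2}$ is the deeper of the two cylinders) or they are incompatible (in which case the intersection is empty); hence products $\chi_{\overrightarrow{w_1}_\rho} \cdot \chi_{\overrightarrow{w_2}_\rho}$ stay in the linear span. The functions are real-valued, so conjugation is automatic, and taking $w$ to be the empty word gives $\overrightarrow{e}_\rho = \Lambda$, so the span is unital. For separation, distinct points $x \neq y \in \Lambda$ pull back via $\iota_\rho^{-1}$ to distinct ends of $\bar{F}_g$, which must disagree at some finite initial segment, producing a reduced word $w$ with $x \in \overrightarrow{w}_\rho$ and $y \notin \overrightarrow{w}_\rho$. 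Stone--Weierstrass then concludes the proof.

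The only potentially delicate step is the clopen-ness verification, as it requires reconciling the combinatorial ``cylinder'' picture on the boundary of the tree with the topology actually induced by the specific edge-weighting in Floyd's metric; the rest of the argument is formal once the point-set topology of $\Lambda$ is under control.
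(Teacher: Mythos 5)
Your proof is correct and is essentially the same argument as the paper's, just unpacked: the paper simply observes that $\Lambda$ is a totally disconnected compact Hausdorff space in which the sets $\overrightarrow{w}_\rho$ form a basis of clopen sets, and calls the density ``immediate,'' while you spell out the Stone--Weierstrass verification (clopen-ness via the Floyd homeomorphism, closure under products via cylinder intersections, unitality from the empty word, and point-separation) that underlies that standard fact.
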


\begin{proof}
This is immediate, since $\Lambda$ is a totally disconnected compact
Hausdorff space, and the sets $\overrightarrow{w}$ form a basis of
clopen sets for its topology. 
\end{proof}

We denote by $A_\infty$ the dense involutive subalgebra of $A$ spanned
by the characteristic functions $\chi_{\overrightarrow{w}_\rho}$. 

\begin{df} Let $\mu_\Lambda$ denote the Patterson-Sullivan measure on
$\Lambda$ (cf. \cite{Patterson}, \cite{SullivanIHES}). Its main property is scaling by the Hausdorff dimension
$\delta_H$ of $\Lambda$: \begin{equation*}\label{PSmeas} 
(\gamma^* d\mu)(x)= |\gamma^\prime (x)|^{\delta_H} \, d\mu(x), \ \ \
\forall \gamma\in \Gamma.
\end{equation*} We define a state $\tau : A_\infty \rightarrow \R$ by 
$$ \tau(\chi_{\overrightarrow{w}_\rho}) := \int\limits_{\Lambda}
\chi_{\overrightarrow{w}_\rho} \mathrm{d}\mu_\Lambda =
\mu_\Lambda(\overrightarrow{w}_\rho). $$ 
The above lemma shows that $\tau$ extends uniquely to a state on
$A$. We define the Hilbert space $H$ to be the GNS-representation of
$A$ arising from this state $\tau$, that 
is, the completion of $A$ with respect to the inner product $\langle
a | b \rangle := \tau(b^*a)$.  
\end{df} 

\begin{df} We now take our inspiration from the construction of
Antonescu and Christensen in \cite{AntChris}.  
The subalgebra $A_\infty$ of $A=C(\Lambda)$ is a limit of finite
dimensional subspaces $A_\infty =
\lim\limits_{\xrightarrow[]{}} A_n$ with $A_n$ the span of the
characteristic functions of sets of ends of reduced words of length
$\leq n$. This filtration is inherited by  $H$. We denote by $H_n$ the
term of the filtration of $H$ corresponding to $A_n$ and we let
$P_n$ denote the orthogonal projection operator onto $H_n$. 
We define the Dirac operator to be 
$$ D:= \lambda_0 P_0 + \sum_{n \geq 1} \lambda_n (P_n-P_{n-1}), $$
where $\lambda_n = (\dim A_n)^3$. Note that $Q_n:=P_n-P_{n-1}$ is the
projection onto the graded pieces, identified with the orthogonal
complements $H_n \ominus H_{n-1}$, which correspond to words of exact 
length $n$. The choice of $\lambda_n$ arises from the fact that we
then arrive at 1-summability (Proposition \ref{tA}): 
\end{df}

\begin{prop}
The triple $\cS_{X}=(A,H,D)$ is a 1-summable spectral
triple.
\end{prop}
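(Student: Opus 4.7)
The plan is to verify the four defining conditions of a finitely summable spectral triple; the specific choice $\lambda_n = (\dim A_n)^3$ is tuned precisely to make $\tr(|D|^{-1})$ finite.

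First I would establish the basic dimension count. The empty word gives $\chi_{\overrightarrow{e}_\rho} = \chi_\Lambda = 1$, so $\dim A_0 = 1$. For $n \geq 1$ there are $2g(2g-1)^{n-1}$ reduced words in $F_g$ of length exactly $n$, and they yield pairwise disjoint clopen sets that together partition $\Lambda$; hence $\dim A_n - \dim A_{n-1} = 2g(2g-1)^{n-1}$ and $\dim A_n \sim \tfrac{g}{g-1}(2g-1)^n$. Because the Patterson-Sullivan measure has full support on $\Lambda$, the state $\tau$ is faithful, so each finite-dimensional $A_n$ embeds isometrically with $\dim H_n = \dim A_n$. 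The operator $D$ then acts diagonally on $H = \bigoplus_n Q_n H$ with real eigenvalues $\lambda_n \to \infty$ on the finite-dimensional subspaces $Q_n H$, and hence is self-adjoint with compact resolvent.

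The main obstacle is bounded commutators $[D, a]$ for $a \in A_\infty$. The key algebraic fact is that $\overrightarrow{w_1}_\rho \cap \overrightarrow{w_2}_\rho$ is either empty or equals $\overrightarrow{w_*}_\rho$ with $w_*$ the longer of $w_1, w_2$ (when one extends the other under $\subseteq$). Consequently $\chi_{\overrightarrow{w_1}_\rho}\cdot\chi_{\overrightarrow{w_2}_\rho} \in A_{\max(|w_1|,|w_2|)}$, so if $a = \chi_{\overrightarrow{w}_\rho}$ with $|w| = m$, multiplication by $a$ preserves $A_n$ for every $n \geq m$; equivalently $a H_n \subseteq H_n$, and self-adjointness of $a$ forces $[P_n, a] = 0$ for all $n \geq m$. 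Writing $D = \sum_{n \geq 0} \lambda_n(P_n - P_{n-1})$ with the convention $P_{-1} := 0$ and applying summation by parts yields
$$[D, a] = \sum_{n=0}^{m-1}(\lambda_n - \lambda_{n+1})[P_n, a],$$
a finite sum of bounded operators (each $\|[P_n,a]\| \leq 2\|a\|$); hence $[D, a]$ is bounded, and linearity extends this to all of $A_\infty$.

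Finally I would verify $1$-summability. Since $D \geq \lambda_0 = 1$ is positive and invertible,
$$\tr(|D|^{-1}) = \sum_{n \geq 0}\frac{\dim(Q_n H)}{\lambda_n} = 1 + \sum_{n \geq 1}\frac{2g(2g-1)^{n-1}}{(\dim A_n)^3},$$
and using the lower bound $\dim A_n \geq c(2g-1)^n$ for a constant $c > 0$, the tail is dominated by a constant multiple of $\sum_n (2g-1)^{-2n}$, which converges for every $g \geq 2$. This establishes $1$-summability and completes the proof.
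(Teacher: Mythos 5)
Your proof is essentially correct and follows the same outline as the paper (self-adjointness from realness of $D$, bounded commutators via eventual commutation of $a$ with the projections, summability from the cubic choice of $\lambda_n$), with two small presentation variants worth noting and one counting slip.

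The slip: you write $\dim A_n - \dim A_{n-1} = 2g(2g-1)^{n-1}$, which would give $\dim A_n \sim \tfrac{g}{g-1}(2g-1)^n$. In fact $\dim A_n = 2g(2g-1)^{n-1}$ for $n\geq 1$, because the characteristic functions of ends of shorter words are \emph{not} independent of those of length-$n$ words: for $|u|<n$ one has $\chi_{\overrightarrow{u}} = \sum_{|w|=n,\, u\subset w}\chi_{\overrightarrow{w}}$, so $A_n$ is already spanned by the length-exactly-$n$ characteristic functions, of which there are $2g(2g-1)^{n-1}$. Thus $\dim(Q_nH)=\dim A_n - \dim A_{n-1}=2g(2g-1)^{n-2}(2g-2)$. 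Fortunately this is still $\Theta((2g-1)^n)$, and $\dim A_n \geq c(2g-1)^n$ holds with $c = 2g/(2g-1)$, so your convergence estimate $\sum (2g-1)^n/(\dim A_n)^3 \lesssim \sum (2g-1)^{-2n}$ goes through unchanged; the error is harmless. (The paper itself sidesteps precise counting by using only the crude bound $\dim A_n \geq n+1$ and estimating $\tr((1+D^2)^{-1/2})\leq 2$.)

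The variants: your Abel-summation rewriting $[D,a]=\sum_{n=0}^{m-1}(\lambda_n-\lambda_{n+1})[P_n,a]$ is a clean alternative to the paper's direct observation that $[Q_n,a]=0$ for $n>m$, giving $[D,a]=\sum_{n\leq m}\lambda_n[Q_n,a]$; both rest on the same fact that multiplication by $a\in A_m$ preserves $A_n$ for $n\geq m$. Your appeal to faithfulness of $\tau$ (full support of the Patterson--Sullivan measure) to justify $\dim H_n = \dim A_n$ is a point the paper leaves implicit, and is worth making explicit as you did.
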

\begin{proof} The $*$-operation is complex conjugation, and since $D$
is real, it is self-adjoint. For $a\in A_{n}$ and for any $m>n$, multiplication by $a$ maps $A_{m-1}$ and $A_m$ into itself. Therefore, $a$ commutes with the projections $P_m$ and $P_{m-1}$ and so $[Q_m,a]=0$. Hence 
$$ [D,a] = \sum_{i=0}^n \lambda_i [Q_i,a]$$
is a finite sum (we set $P_{-1}=0$ for convenience). 
Thus, the commutators of $D$ with elements in  
the dense subalgebra $A_{\infty}$ of $A$ are bounded. 

Moreover, one has $\dim A_{n} \geq
n+1$, hence the 1-summability (and compact resolvent): 
$$ \tr((1+D^2)^{-1/2})=1 +\sum_{n=1}^\infty (1+\lambda_n^2)^{-1/2} (\dim
H_{n}-\dim H_{n-1}) $$
$$ \leq 1+\sum_{n=1}^\infty (1+\lambda_n^2)^{-1/2}\dim  H_{n}
\leq 1+\sum_{n=1}^\infty (1+\lambda_n^2)^{-1/2}\dim  A_{n} $$
$$ \leq 1+\sum_{n=1}^\infty (\dim  A_{n})^{-2} \leq  
1+\sum_{n=1}^\infty (n+1)^{-2} \leq  2, $$ where we used $\lambda_n =
(\dim A_n)^3$ in the second-to-last inequality. This proves the
proposition. 
\end{proof}

\begin{rem}
A recent deep theorem of Rennie and V\'arilly (\cite{RV}) allows one
to decide whether a given spectral triple is associated to an actual
commutative Riemannian spin manifold. For the purpose of this paper,
since we are mostly interested in the zeta functions, we do not
consider any additional structure on the spectral triple. In
particular, our Dirac operator is only considered up to sign, since
the sign does not play a role in the zeta functions, while for
\cite{RV}, the sign provides the essential information on the
$K$-homology fundamental class. It is possible that our construction
may be refined to incorporate the further necessary properties of an
abelian spectral triple to which the reconstruction theorem can be
applied. In that case, it seems that the underlying metric geometry
should probably relate to the existence of quasi-circles of limit sets
of Schottky groups as in  \cite{Bowen} --- see also the next
remark. \end{rem} 

\begin{rem}
Notice that our construction provides a 1-summable spectral triple on
the limit set, regardless of the actual value of its Hausdorff
dimension (which can be greater than one). Thus, the metric dimension
seen from this construction will be in general different from the
actual metric dimension of the limit set embedded in $\P^1(\C)$. The
existence in all cases of a 1-summable spectral triple on the limit
set reflects the fact that topologically $\Lambda$ is always a Cantor
set that can be embedded in a topologically 1-dimensional quasi-circle
(Bowen \cite{Bowen}). In the metric induced by the embedding in
$\P^1(\C)$, the quasi-circle need not be rectifiable (when the
Hausdorff dimension of the limit set exceeds 1), but the existence of
1-summable spectral triples is compatible with the topological
dimension being one in all cases.   
\end{rem}

\section{Boundary isometry from the
spectral zeta function} \label{boundary}

In this section we study the effect of equality of zeta functions on
metric properties of the limit sets. Since we are dealing with two
Riemann surfaces $X_1, X_2$, we will now sometimes index symbols
($H,D,\zeta,\lambda,\dots)$ by the index of the corresponding Riemann
surface and will do so without further mention. If there is no index,
we refer to any of the two Riemann surfaces. 

As was already observed by Connes for the spectral triple associated
to a usual spin manifold, only the action of $A$ on $H$, or of $D$ on
$H$, doesn't capture interesting (metrical/conformal)  information
about the space, it is the  interaction of the action of $A$ and $D$
that is important (\cite{Connesbook}, VI.1). For our purposes, this
interaction will be encoded in the framework of zeta functions of
spectral triples. The zeta functions are $\zeta_a(s):=\tr(a|D|^s)$, a
priori defined for $\mathrm{Re}(s)$ sufficiently negative, but then
meromorphically extended to the whole complex plane with poles at the
dimension spectrum of the spectral triple (see \cite{ConnesLMP}). 

\begin{thm}\label{zetalambda}
Let $X_1$ and $X_2$ be compact Riemann surfaces of genus at least
$2$. If $ \zeta_{a,{X_1}}(s) = \zeta_{a,X_2}(s)$ for all $a \in
A_\infty$, then $g_1=g_2$ and $$ \forall \eta \in F_g \ : \
\mu_1(\overrightarrow{\eta}_{\rho_1}) =
\mu_2(\overrightarrow{\eta}_{\rho_2}). $$ \end{thm}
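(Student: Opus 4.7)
The plan is to compute $\zeta_{X,\eta}(s):=\zeta_{X,\chi_{\overrightarrow{\eta}_\rho}}(s)$ explicitly in an orthonormal basis of $H$ adapted to the filtration and to read off $\mu_\Lambda(\overrightarrow{\eta}_\rho)$ as a coefficient in the resulting Dirichlet-type expansion. The genus is dealt with first: since $\lambda_m=(\dim A_m)^3$ and $\dim(H_m\ominus H_{m-1})$ depend only on $g$, the zeta function at the unit $\zeta_{X,1}(s)=\sum_m\lambda_m^{s}\dim(H_m\ominus H_{m-1})$ is a function of $g$ alone. Equating this for $X_1$ and $X_2$ as a generalized Dirichlet series in $s$ forces $\lambda_{m,1}=\lambda_{m,2}$ for every $m$, and hence $g_1=g_2$. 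With a common genus, the dense subalgebras $A_{\infty,1}$ and $A_{\infty,2}$ are identified through the combinatorial labelling $w\mapsto\chi_{\overrightarrow{w}_\rho}$ by reduced words $w\in F_g$, so that $\zeta_{X_1,a}(s)=\zeta_{X_2,a}(s)$ is meaningful for $a=\chi_{\overrightarrow{\eta}_\rho}$.

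Next I would set up the orthonormal basis adapted to the nested cone structure. For each reduced word $w$ (including the empty word), the admissible one-letter extensions partition $\overrightarrow{w}_\rho$, so the characteristic functions $\chi_{\overrightarrow{wx}_\rho}$ are pairwise orthogonal under $\langle u|v\rangle=\tau(v^*u)$ and span a subspace $V_w$ containing $\chi_{\overrightarrow{w}_\rho}$. Writing $V_w=\C\chi_{\overrightarrow{w}_\rho}\oplus V_w^\perp$ yields the decomposition $H=\C\chi_\Lambda\oplus\bigoplus_w V_w^\perp$, in which the filtration grading reads $H_n\ominus H_{n-1}=\bigoplus_{|w|=n-1}V_w^\perp$. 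Since vectors in $V_w^\perp$ are supported in $\overrightarrow{w}_\rho$, they are eigenvectors of multiplication by $\chi_{\overrightarrow{\eta}_\rho}$ (with eigenvalue $0$ or $1$) whenever $\overrightarrow{w}_\rho$ is disjoint from or contained in $\overrightarrow{\eta}_\rho$.

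Setting $k=|\eta|$ and writing $w_j$ for the length-$j$ prefix of $\eta$, the trace $\tr(a Q_m)$ splits over words $w$ of length $m-1$ into three cases: cones disjoint from $\overrightarrow{\eta}_\rho$ (no contribution), $\eta\subseteq w$ (full contribution $\dim V_w^\perp$ from each such $w$), and the single prefix $w=w_{m-1}$ for $m\le k$, where multiplication by $a$ compresses to a rank-one operator on $V_w^\perp$ whose trace is evaluated through the $\mu_\Lambda$-weighted inner product. Assembling the three contributions and abbreviating $\mu(w):=\mu_\Lambda(\overrightarrow{w}_\rho)$ (with $\mu(w_k)=\mu(\eta)$) one obtains, after direct calculation, a formula of the shape
\begin{equation*}
\zeta_{X,\eta}(s)=\mu(\eta)\,\lambda_0^{s}\,+\,\mu(\eta)\sum_{m=1}^{k}\frac{\mu(w_{m-1})-\mu(w_m)}{\mu(w_{m-1})\,\mu(w_m)}\,\lambda_m^{s}\,+\,(2g-2)\sum_{m\ge k+1}(2g-1)^{m-1-k}\,\lambda_m^{s}.
\end{equation*}

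Finally, because $\lambda_0=1$ and $\lambda_m=(\dim A_m)^{3}$ is strictly increasing in $m$, the exponentials $\{\lambda_m^{s}\}_{m\ge0}$ are linearly independent as functions of $s$, so the equality $\zeta_{X_1,\eta}(s)=\zeta_{X_2,\eta}(s)$ forces equality of coefficients term by term. The coefficient at $\lambda_0^{s}=1$ is $\mu(\eta)$, which immediately gives $\mu_1(\overrightarrow{\eta}_{\rho_1})=\mu_2(\overrightarrow{\eta}_{\rho_2})$; alternatively one can induct on $|\eta|$ and extract the measure from the coefficient at $\lambda_k^{s}$ via the inductive knowledge of $\mu(w_{k-1})$. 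The main technical obstacle is the prefix case: because $a=\chi_{\overrightarrow{\eta}_\rho}$ does not commute with $P_m$ when $m\le|\eta|$, its compression to $V_{w_{m-1}}^\perp$ must be handled carefully using the $\mu$-weighted orthogonalization inside $V_{w_{m-1}}$, and obtaining the clean factor $(\mu(w_{m-1})-\mu(w_m))/(\mu(w_{m-1})\mu(w_m))$ is where the bulk of the bookkeeping sits.
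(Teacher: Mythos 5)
Your argument is correct, and it tracks the paper's structure closely in its first two stages (extracting the genus from $\zeta_{X,1}$, identifying the dense subalgebras combinatorially, building an orthonormal basis of $H$ adapted to the word-length filtration, and invoking the identity theorem for generalized Dirichlet series in the exponents $\lambda_m$). The explicit closed form you obtain for $\zeta_{X,\eta}(s)$ checks out: it can be verified quickly via $\tr(aQ_m)=\tr(aP_m)-\tr(aP_{m-1})$ and the orthogonal basis $\{\chi_{\overrightarrow{w}}\}_{|w|=m}$ of $H_m$, which gives $\tr(aP_m)=\sum_{|w|=m}\mu(\overrightarrow{w}\cap\overrightarrow{\eta})/\mu(\overrightarrow{w})$, equal to $\mu(\eta)/\mu(w_m)$ for $m\le k$ and $(2g-1)^{m-k}$ for $m\ge k$.

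Where you genuinely diverge from the paper is in the final extraction step, and your route is simpler. The paper does not use an explicit formula; instead it proves that the $n$-th coefficient $c_n(a):=\tr(aQ_n)$ factors for $n=|\eta|-1$ as $c_{n}(\chi_{\overrightarrow{\eta}})=\mu(\overrightarrow{\eta})\cdot\kappa$, with $\kappa$ a nonzero quantity depending only on $\mu(\overrightarrow{v})$ for $|v|<|\eta|$, and then inducts on $|\eta|$ (this is the content of Lemma~\ref{algfunc}, the nonvanishing lemma, and the concluding proposition). You instead note that $\lambda_0=(\dim A_0)^3=1$ while $\lambda_m>1$ for $m\ge1$, so the coefficient of $\lambda_0^s\equiv1$ is uniquely determined by the zeta function, and this coefficient is $c_0(a)=\langle\Psi_e|a\Psi_e\rangle=\tau(a)=\mu(\overrightarrow{\eta})$ directly. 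That observation kills the induction and the nonvanishing lemma in one stroke; the explicit middle-range coefficients $\mu(\eta)\bigl(\mu(w_{m-1})-\mu(w_m)\bigr)/\bigl(\mu(w_{m-1})\mu(w_m)\bigr)$ in your formula, while correct, are then more than is actually needed. (The paper's display $\tr(aD^s)=1+\sum_{n\ge1}\lambda_n^sc_n(a)$ writes a literal ``$1$'' where the general constant term is $\tau(a)$, which is perhaps why the shortcut was not taken there; your version of the constant term is the right one.) Both routes are sound; yours buys a substantially shorter endgame at the cost of noticing that the lowest frequency $\lambda_0$ is isolated at $1$.
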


\begin{rem}
As was indicated in the introduction, equality of zeta functions
should be understood as follows: both the algebras $A_1$ and $A_2$ of
$X_1$ and $X_2$, respectively, have a unit. If the zeta functions for
this unit are equal, then we will conclude from this that the Riemann
surfaces have the same genus. Therefore, the algebras $A_1$ and $A_2$
are isomorphic via the homeomorphism $\Phi: \Lambda_1 \rightarrow
\Lambda_2$ induced from the Floyd maps in the triangle 
$$ \xymatrix{    & & \Lambda_1 \ar[dd]^{\Phi} \\ \bar{F}_g
\ar[urr]^{\iota_{\rho_1}} \ar[drr]^{\iota_{\rho_2}} & \\ & &
\Lambda_2}$$ It then makes sense to interpret the expression
$\zeta_{X_1,a}(s)=\zeta_{X_2,a}(s)$ for elements $a\neq 1$ in
$A_\infty$. 
\end{rem}

\begin{proof} We make the convention that all words are reduced.

Let $X$ be a Riemann surface of genus $g \geq 2$ and $\cS_X$ its
associated spectral triple.  Suppose given an element $a=\chi_U$ in
$A_\infty$. We can assume $U=\overrightarrow{\eta}$ for a given word
$\eta$ of length $|\eta|=m$, since any  $a$ is a linear combination of such.  

We now construct an orthogonal basis for $H$.
First, we prove a lemma about ends of words.

\begin{lem} \label{lemwords} Let $w_1, w_2$ denote two words. If
$\overrightarrow{w_1} \cap \overrightarrow{w_2} \neq \emptyset$, then
$\overrightarrow{w}_1 \subseteq \overrightarrow{w}_2$ or conversely
$\overrightarrow{w}_2 \subseteq \overrightarrow{w}_1$. In particular, if we set $\max\{w,v\}$ to be the largest of the words $w$ and $v$ (if they are comparable in the order $\subseteq$) and $\emptyset$ otherwise, we find that $$ \overrightarrow{w_1} \cap \overrightarrow{w_2} = \overrightarrow{\max\{w_1,w_2\}}$$ with the convention 
$\overrightarrow{\emptyset}=\emptyset$, see Figure \ref{figure1}. 
\begin{figure}[h] \begin{center} \input{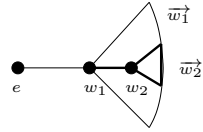}  
\end{center}
\caption{{Illustration of Lemma \ref{lemwords}.}}
\label{figure1}
\end{figure}
 
\end{lem}
\begin{proof}   

If this were not the case, then there is an end that lands in
the nonempty intersection and starts from both segments $w_1$ and
$w_2$, leading to a loop in the Cayley graph $Y_g$, but $Y_g$ is a
tree, so this is impossible, see Figure \ref{figure2} (as usual, we identify the limit set $\Lambda$ topologically with $\bar{F}_g$, by Floyd's Lemma).

\begin{figure}[h] \begin{center} \input{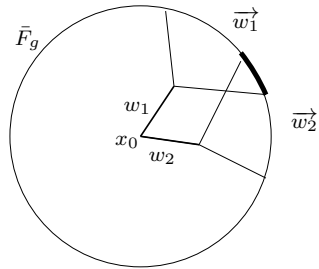} 
\end{center}
\caption{{ A forbidden situation.}}
\label{figure2}
\end{figure}

\end{proof} 

It is then easy to find a basis for the individual spaces $H_n$. 

\begin{lem}\label{basis}
The functions $\chi_w$ for $|w|=n$ for a linear basis for $H_n$, and $$ \langle \chi_w |  \chi_v \rangle = 
\mu(\overrightarrow{\max\{v,w\}}). $$
 \end{lem}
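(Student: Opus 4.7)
The plan is to prove the two assertions --- that $\{\chi_w : |w|=n\}$ is a basis for $H_n$, and the inner product formula --- separately, relying on Lemma \ref{lemwords} throughout.

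\textbf{Spanning.} I would first observe that for any word $v$ with $|v| \leq n$, the set $\overrightarrow{v}$ decomposes as the disjoint union of the sets $\overrightarrow{w}$ over all reduced words $w$ of length exactly $n$ with $v \subseteq w$. This follows by induction on $n-|v|$: at each step $v$ admits exactly $2g-1$ one-letter reduced extensions (or $2g$ if $v=e$), and the ends starting with $v$ partition according to the $(|v|+1)$-th letter. Consequently,
$$ \chi_{\overrightarrow{v}} = \sum_{\substack{|w|=n \\ v \subseteq w}} \chi_{\overrightarrow{w}}, $$
so every generator of $A_n$ is a linear combination of the $\chi_{\overrightarrow{w}}$ with $|w|=n$, showing that these functions span $H_n$.

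\textbf{Linear independence.} For distinct words $w,v$ of the same length $n$, neither satisfies $w \subseteq v$ nor $v \subseteq w$, so Lemma \ref{lemwords} gives $\overrightarrow{w} \cap \overrightarrow{v} = \emptyset$. Hence $\chi_{\overrightarrow{w}}$ and $\chi_{\overrightarrow{v}}$ have disjoint supports. Since the Patterson--Sullivan measure $\mu_\Lambda$ is supported on the full limit set $\Lambda$ (as $\Lambda$ is the unique minimal closed invariant set for the Schottky action), we have $\mu_\Lambda(\overrightarrow{w}) > 0$ for every reduced word $w$. Thus if $\sum_{|w|=n} c_w \chi_{\overrightarrow{w}}$ represents the zero class in $H$, its squared norm $\sum |c_w|^2 \mu_\Lambda(\overrightarrow{w})$ vanishes, which forces each $c_w = 0$.

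\textbf{Inner product formula.} The $*$-operation on $A$ is complex conjugation, and characteristic functions are real-valued, so $\chi_{\overrightarrow{v}}^{*} \chi_{\overrightarrow{w}} = \chi_{\overrightarrow{v} \cap \overrightarrow{w}}$ (pointwise product of characteristic functions is the characteristic function of the intersection). By Lemma \ref{lemwords}, $\overrightarrow{v} \cap \overrightarrow{w} = \overrightarrow{\max\{v,w\}}$, with the convention $\overrightarrow{\emptyset} = \emptyset$. Applying the state $\tau$ yields
$$ \langle \chi_{\overrightarrow{w}} \mid \chi_{\overrightarrow{v}} \rangle = \tau\bigl(\chi_{\overrightarrow{v}}^{*}\chi_{\overrightarrow{w}}\bigr) = \tau\bigl(\chi_{\overrightarrow{\max\{v,w\}}}\bigr) = \mu_\Lambda\bigl(\overrightarrow{\max\{v,w\}}\bigr). $$

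The one step that requires slight care --- and is in my view the main subtlety --- is the linear independence, because one must rule out that some $\overrightarrow{w}$ collapses to a $\mu_\Lambda$-null set (which would make the corresponding $\chi_{\overrightarrow{w}}$ zero in $H$). This is where the positivity of $\mu_\Lambda$ on clopen subsets of $\Lambda$ enters; the rest is essentially bookkeeping on the tree $\bar F_g$ of reduced words, combined with the intersection formula of Lemma \ref{lemwords}.
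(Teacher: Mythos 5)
Your proof follows the same route as the paper's: disjoint supports for linear independence, the decomposition $\chi_{\overrightarrow{v}} = \sum_{|w|=n,\, v \subseteq w} \chi_{\overrightarrow{w}}$ for spanning, and the definition of $\tau$ combined with Lemma \ref{lemwords} for the inner-product formula. The one place you go beyond the paper is your explicit observation that disjointness of supports alone does not give linear independence in the GNS space $H$ unless each $\mu_\Lambda(\overrightarrow{w})$ is strictly positive — the paper asserts independence directly from disjointness, leaving the full-support property of the Patterson--Sullivan measure implicit, whereas you correctly flag it and justify it. This is a genuine (if small) gap you are filling rather than a different approach.
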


\begin{proof} The characteristic functions $\chi_{\overrightarrow{w}}$
for $|w|=n$ give a linear basis for $H_n$: they are
linearly independent as their supports are disjoint, and they generate the space since for any word $u$ of length $|u|< n$ one
has $$\chi_{\overrightarrow{u}} =\sum_{\substack{|w|=n \\u \subset w}}
\chi_{\overrightarrow{w}}.$$ Indeed, by the previous Lemma, all occuring $\chi_{\overrightarrow{w}}$ have disjoint support, and their union $\bigcup \overrightarrow{w}$ equals $\overrightarrow u$.

Now
$$  \langle \chi_{\overrightarrow{w}_1} |  \chi_{\overrightarrow{w}_2}
\rangle = \tau(\chi_{\overrightarrow{w}_1}^*
\chi_{\overrightarrow{w}_2}) =\mu_X (\overrightarrow{w}_1 \cap
\overrightarrow{w}_2), $$
and the previous lemma applies. 
\end{proof}

\begin{lem} For all $n>0$, we have 
$$\dim A_n=\dim H_n = 2g(2g-1)^{n-1}.$$ 
\end{lem}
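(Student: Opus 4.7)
The plan is to reduce the dimension count to an elementary combinatorial enumeration of reduced words of length exactly $n$ in the free group $F_g$, using the basis description already obtained in Lemma \ref{basis}.

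First I would observe that by Lemma \ref{basis} the vectors $\chi_{\overrightarrow{w}}$ with $|w|=n$ form a linear basis of $H_n$. The same collection also spans $A_n$: a word $u$ of length $k<n$ satisfies $\chi_{\overrightarrow{u}}=\sum_{|w|=n,\ u\subseteq w}\chi_{\overrightarrow{w}}$ (the decomposition already used in the proof of Lemma \ref{basis}), so every generator of $A_n$ is a linear combination of such $\chi_{\overrightarrow{w}}$. Linear independence of these $\chi_{\overrightarrow{w}}$ is free of charge, since their supports are pairwise disjoint by Lemma \ref{lemwords}. Therefore $\dim A_n=\dim H_n$ equals the number $N_n$ of reduced words of length exactly $n$ in $F_g$.

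Next I would count $N_n$. The alphabet underlying $F_g$ has $2g$ letters (the $g$ generators and their inverses), and a word is reduced exactly when no letter is immediately followed by its inverse. The first letter of a reduced word of length $n\geq 1$ can be chosen freely in $2g$ ways; once the $k$th letter is fixed, the $(k+1)$st letter may be any of the $2g$ alphabet symbols except the inverse of the $k$th, leaving $2g-1$ admissible choices. Multiplying,
\[
N_n=2g\cdot(2g-1)^{n-1},
\]
which yields the claimed formula. No essential obstacle is expected; the only step worth pausing on is verifying that the spanning relation $\chi_{\overrightarrow{u}}=\sum \chi_{\overrightarrow{w}}$ with $u\subseteq w$ and $|w|=n$ is exhaustive and non-redundant, and this is precisely what Lemma \ref{lemwords} guarantees.
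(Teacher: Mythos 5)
Your proof is correct and follows essentially the same route as the paper: identify $\{\chi_{\overrightarrow{w}} : |w|=n\}$ as a simultaneous basis for $A_n$ and $H_n$ (spanning via the length-$n$ refinement, independence via disjoint supports from Lemma \ref{lemwords}), then count reduced words by $2g$ choices for the first letter and $2g-1$ for each subsequent one. As a small bonus, you state the reduced-word constraint precisely --- each new letter must avoid the \emph{inverse} of the previous one --- whereas the paper's phrasing (``differ from the terminal letter'') is slightly loose, though the resulting count of $2g-1$ choices is the same.
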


\begin{proof} The space $A_{n}$ is spanned by the linear basis 
$\chi_{\overrightarrow{w}_\rho}$ with $w$ a word of exact length $
n$, since as in the proof of Lemma \ref{basis}, all functions corresponding to shorter words are dependent on these functions. An easy count gives the result: we pick the first letter from the alphabet on $g$ letters or its inverses, and consecutive letters with the condition that they differ from the terminal letter of the word already constructed. 
\end{proof}
We now construct a complete orthonormal basis for $H$ inductively, by adding to a basis of $H_{n}$ suitable elements of $H_{n+1}$ in the style of a Gram-Schmidt process. Initially, we set
$| \Psi_e \rangle = \chi_\Lambda$ and \begin{equation} \label{length1}  |\Psi_w \rangle := \frac{1}{\sqrt{\mu_X(\overrightarrow{w})}}\,
\chi_{\overrightarrow{w}} \ \ \ \ (|w|=1) \end{equation}
for $w$ running through a set $S$ of words of length one (viz., letters in the alphabet, and their inverses) unequal to one (arbitrarily chosen) letter. Set $I_1:=S \cup \{e\}$; then $\{|\Psi_w\rangle\}_{w \in I_1}$  is an orthonormal basis for $H_1$ by Lemma \ref{basis}. 

Now suppose $$\{|\Psi_w\rangle \, : \, w \in I_n \}$$ is our inductively constructed basis for $H_n$, where  $I_n$ is an index set. For every word $w$ of length $n$, choose a set $V_w$ of $2g-2$ letters
from the alphabet and its inverses, that are unequal to $t(w)^{-1}$, the inverse of the terminal letter of the fixed $w$, i.e., leave out one arbitrarily chosen letter from the possible extensions of $w$ to an admissible word of length $n+1$. Let $$I_{n+1} = I_n \cup \bigcup\limits_{|w|=n} V_w.$$ Figure \ref{figure3} has an example in the length $\leq 3$ words in the Cayley graph $Y_2$ for $g=2$.

\begin{figure}[h] \begin{center} \begin{picture}(120,120)\includegraphics{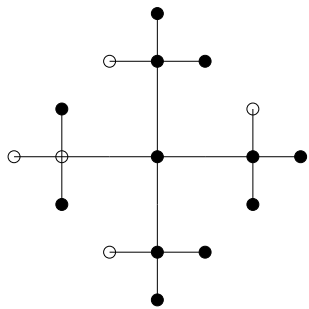}\end{picture} \end{center}
\begin{center} \caption{{ Black dots form a possible $I_3 \subseteq Y_2$}}\end{center}
\label{figure3}
\end{figure}

We claim that $\{\chi_{\overrightarrow{w}}\}_{w \in I_{n+1}-I_n}$ is a basis for $H_{n+1} \ominus H_n$. The functions are linearly independent since their supports are disjoint. Hence it suffices to check dimensions. But $$\dim (H_{n+1} \ominus H_n) = 2g(2g-1)^{n-1}(2g-2) = |I_{n+1}-I_n|.$$

We define $\{|\Psi_w \rangle\}_{w \in I_{n+1}}$ as the Gram-Schmidt orthonormalisation of $$\{ |\Psi_w\rangle \}_{w \in I_n} \, \cup \, \{ \chi_{\overrightarrow{w}} \}_{w \in I_{n+1} - I_n}.$$ 
We recall that this means we choose an enumeration  of the words in $I_{n+1}-I_n:$ $$ I_{n+1}-I_n = \{w_1,\dots,w_r\}$$ and set inductively for $i=1,\dots,r$
\begin{equation} \label{defPsi}  |\Psi_{w_i}\rangle = \frac{|\phi_{w_i} \rangle}{\| \phi_{w_i} \|} \end{equation}
with
\begin{equation} \label{defphi}  |\phi_{w_i} \rangle :=\chi_{\overrightarrow{w_{i}}} -\sum_{w \in I_n \cup\{w_1,\dots,w_{i-1}\}} 
\langle \Psi_w |\chi_{\overrightarrow{w_{i}}}\rangle |\Psi_w\rangle. \end{equation}
Then indeed,
$ \langle \Psi_v |\Psi_w\rangle =\delta_{v,w}, $
for all $|v|,|w|\leq n+1$.

Set $$I_\infty = \bigcup_{n \geq 0} I_n.$$ We use the complete basis $\{ |\Psi_w\rangle \}_{w \in I_\infty}$ of $H$ to compute the trace of a trace-class
operator $T$ in the form
$ \tr(T)=\sum \langle \Psi_w | T \Psi_w\rangle. $
With $T=aD^s$, we have
$$ \tr(aD^s)=1+\sum_w \langle \Psi_w |a \sum_{n \geq 1} \lambda_n^s
(P_n-P_{n-1}) \Psi_w \rangle . $$
Now the projector $P_n-P_{n-1}$ onto $H_n \ominus H_{n-1}$ is $\sum\limits_{r \in I_n - I_{n-1}}| \Psi_r \rangle \langle \Psi_r |$, so we get
$$ (P_n - P_{n-1}) |\Psi_w\rangle =\sum_{r \in I_n-I_{n-1}}| \Psi_r \rangle \langle \Psi_r |
\Psi_w \rangle = \delta_{|w|,n} \delta_{r,w} | \Psi_r \rangle =  \delta_{|w|,n} | \Psi_w \rangle.$$
Thus, we rewrite the above as
$$ \tr(aD^s)=1+\sum_{n \geq 1} \sum_{w \in I_n-I_{n-1}} \lambda_n^s  \langle \Psi_w |
a \Psi_w \rangle . $$
If we denote by 
$$ c_n(a) =\sum_{w \in I_n-I_{n-1}} \langle \Psi_w |
a  \Psi_w \rangle, $$
we can write
$$ \zeta_{a,X}(s) = \tr(a D^s)=1+ \sum\limits_{n\geq 1} \lambda_{n}^s\,
 c_{n}(a), \ \ \mathrm{Re}(s) \ll 0. $$ 

\begin{lem} \label{genusequal} If $ \zeta_{a,1}(s) = \zeta_{a,2}(s) $ for
$a=1=\chi_\Lambda$ the identity of $A_1$, respectively $A_2$, then
$g_1=g_2$. \end{lem}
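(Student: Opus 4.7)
The plan is to write down $\zeta_{1,X}(s)$ in closed form using the orthonormal basis $\{\Psi_w\}_{w\in I_\infty}$ that has just been constructed, and then read off $g$ from the asymptotic behaviour as $s\to-\infty$ along the real axis.

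First, I would observe that when $a=1=\chi_\Lambda$ acts as the identity operator, each diagonal matrix element satisfies $\langle \Psi_w|1\cdot\Psi_w\rangle=1$, so the general expression $\zeta_{a,X}(s)=1+\sum_{n\geq 1}\lambda_n^s c_n(a)$ collapses to
$$ \zeta_{1,X}(s)=1+\sum_{n\geq 1}\lambda_n^s\bigl(\dim H_n-\dim H_{n-1}\bigr). $$
Inserting $\dim H_n=2g(2g-1)^{n-1}$ from the previous lemma and $\lambda_n=[2g(2g-1)^{n-1}]^3$, the $n=1$ summand contributes $(2g-1)(2g)^{3s}$, while the $n\geq 2$ summands form a geometric series in $(2g-1)^{3s+1}$, yielding a closed rational expression in $(2g)^{3s}$ and $(2g-1)^{3s+1}$.

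Second, I would let $s\to-\infty$ along the reals. The smallest eigenvalue $\lambda_1=(2g)^3$ decays slowest, and since for each $n\geq 2$ the corresponding term carries an extra factor $(2g-1)^{3s(n-1)}\to 0$, the $n=1$ summand strictly dominates, giving
$$ \zeta_{1,X}(s)-1\;\sim\;(2g-1)(2g)^{3s}\qquad(s\to-\infty). $$
Equality of the two zeta functions forces this asymptotic to coincide for $X_1$ and $X_2$; taking logarithms,
$$ \log\bigl(\zeta_{1,X}(s)-1\bigr) \;=\; \log(2g-1)+3s\log(2g)+o(1), $$
so matching the coefficient of $s$ yields $\log(2g_1)=\log(2g_2)$, hence $g_1=g_2$. (Matching the constant then provides the cross-check $2g_1-1=2g_2-1$.)

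The only nuisance will be the base-case bookkeeping: the multiplicity $|I_1-I_0|=2g-1$ does not fit the formula $|I_n-I_{n-1}|=2g(2g-2)(2g-1)^{n-2}$ valid for $n\geq 2$, so the $n=1$ term must be separated before summing the geometric series. I do not foresee a genuine obstacle beyond that — once the explicit expression is written down, the asymptotic comparison is a one-line matter.
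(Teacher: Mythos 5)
Your proposal is correct and follows essentially the same route as the paper: compute $\zeta_{1,X}(s)$ explicitly via the orthonormal basis, using $c_n(1)=\dim H_n-\dim H_{n-1}$, and let $s\to-\infty$ so that the $n=1$ term $\lambda_1^s=(2g)^{3s}$ dominates, forcing $g_1=g_2$. Your care with the $n=1$ base case is in fact slightly more scrupulous than the paper, whose stated formula $c_n(1)=2g(2g-1)^{n-2}(2g-2)$ is only valid for $n\geq 2$ (the correct value is $c_1(1)=2g-1$); this discrepancy is harmless since the limiting argument depends only on the exponential rate $(2g)^{3s}$, not on the precise constant.
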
 

\begin{proof} We know that $\lambda_{n}=(\dim
A_{n})^3=(2g)^3(2g-1)^{3n-3}. $ By orthnormality, we find that $$c_n(1)= \sum_{|w| \in I_n-I_{n-1}} \langle \Psi_w | \Psi_w \rangle = \sum_{|w| \in I_n-I_{n-1}} 1 \, = \, 2g(2g-1)^{n-2}(2g-2).$$
Hence we find for $a=1$ that $$ \zeta_{1}(s) = 1 + \sum_{n \geq 1} \lambda_n^s c_n(1) = 1+ (2g)^{3s+1} \, \frac{2g-2}{2g-1} \, \sum_{n \geq 1} (2g-1)^{(3s+1)(n-1)},  $$  and thus \begin{equation} \label{zeta1} \zeta_1(s) = 1 +\frac{2g-2}{2g-1} \cdot \frac{(2g)^{3s+1}}{1-(2g-1)^{3s+1}}. \end{equation} 
For $a=1$, the 
condition $ \zeta_{a,1}(s) = \zeta_{a,2}(s) $ is thus equivalent to 
$$\frac{2g_1-2}{2g_1-1} \cdot \frac{2g_2-1}{2g_2-2} \cdot  \left( \frac{g_1}{g_2} \right)^{3s+1}  = \frac{1-(2g_1-1)^{3s+1}}{1-(2g_2-1)^{3s+1}} \mbox{ for } \mathrm{Re}(s) \ll 0 $$
If we let $s$ tend to $- \infty$, the right hand side tends to $1$. However, unless $g_1=g_2$, the left hand side tends to zero. This finishes the proof that $g_1=g_2$. 
\end{proof}

As mentioned in the remark above, we conclude from this lemma that the
algebras $A_1$ and $A_2$ are isomorphic via the induced Floyd
homeomorphism $\Phi: \Lambda_1 \rightarrow \Lambda_2$. 
 This makes the condition $ \zeta_{a,1}(s) = \zeta_{a,2}(s) $ meaningful.
 
\begin{lem}\label{equal} $c_{n,{1}}(a)=c_{n,{2}}(a)$ for all $a \in A_\infty$. 
\end{lem}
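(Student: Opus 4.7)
The plan is to read off the coefficients $c_n(a)$ from the Dirichlet-type series expansion $\zeta_{a,X}(s)=1+\sum_{n\geq 1}\lambda_n^s c_n(a)$. By Lemma~\ref{genusequal}, the hypothesis gives $g_1=g_2=g$, so the two eigenvalue sequences coincide: $\lambda_{n,1}=\lambda_{n,2}=\lambda_n=(2g)^3(2g-1)^{3(n-1)}$. The equality of zeta functions then reduces, on some left half-plane $\mathrm{Re}(s)\ll 0$, to the single identity
$$ \sum_{n\geq 1} \lambda_n^s\,\bigl(c_{n,1}(a)-c_{n,2}(a)\bigr)=0. $$

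The key observation I would exploit is that the $\lambda_n$ form a geometric progression, so $\log\lambda_n = 3\log(2g)+3(n-1)\log(2g-1)$ is arithmetic. I would therefore substitute $z:=(2g-1)^{3s}$, giving $\lambda_n^s=(2g)^{3s}z^{n-1}$; after dividing through by the nonvanishing factor $(2g)^{3s}$ the identity above becomes the power-series equation
$$ \sum_{n\geq 1} \bigl(c_{n,1}(a)-c_{n,2}(a)\bigr)\,z^{n-1}=0. $$
As $\mathrm{Re}(s)\to -\infty$, the variable $z$ ranges over a punctured neighbourhood of the origin, which lies inside the radius of convergence guaranteed by the crude estimate $|c_n(a)|\leq \|a\|\cdot\dim(H_n\ominus H_{n-1})\leq \|a\|\cdot 2g(2g-1)^{n-1}$ combined with the $1$-summability established in Proposition~\ref{tA}. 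The uniqueness of Taylor coefficients then forces every $c_{n,1}(a)-c_{n,2}(a)$ to vanish, which is exactly the claim.

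I do not anticipate a substantive obstacle: the whole argument amounts to the linear independence of the exponentials $s\mapsto\lambda_n^s$ for distinct $\lambda_n$, and the geometric structure of the spectrum makes this linear independence elementary via the change of variable $z=(2g-1)^{3s}$. The only soft point is verifying that the resulting power series converges on an open set, which is handled by the bound on $\dim H_n$ already used in the proof of Proposition~\ref{tA} and by the fact that the original Dirichlet series is assumed to converge on the left half-plane.
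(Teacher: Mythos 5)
Your proof is correct but takes a genuinely different route from the paper's. The paper simply observes that the $\lambda_n=(2g)^3(2g-1)^{3(n-1)}$ are \emph{distinct positive integers} and recasts the identity $\sum_n\bigl(c_{n,1}(a)-c_{n,2}(a)\bigr)\lambda_n^s\equiv 0$ as an identically vanishing ordinary Dirichlet series $\sum_N\tilde c_N N^s\equiv 0$, then invokes the identity theorem for Dirichlet series. You instead exploit the fact that the $\lambda_n$ form a geometric progression, substitute $z=(2g-1)^{3s}$, and reduce to the uniqueness of power-series coefficients for an analytic function vanishing on a punctured disc. Your route is more elementary in that it only requires the identity theorem for holomorphic functions rather than a fact about Dirichlet series, but it depends crucially on the geometric structure of the spectrum; the paper's argument works for any spectral triple whose eigenvalues are distinct positive integers, regardless of their arithmetic pattern. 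One small correction: the bound you should quote is $|c_n(a)|\leq\|a\|\cdot\dim(H_n\ominus H_{n-1})$, not $\|a\|\cdot\dim H_n$ (and convergence of the power series on a small disc follows directly from this dimension count; the appeal to $1$-summability of Proposition~\ref{tA} is not really the operative fact), but this does not affect the soundness of the argument.
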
  
\begin{proof}
The equality $\zeta_{a,1}(s) = \zeta_{a,2}(s) $ is equivalent to 
 $$\sum_{n \geq 0} \left( c_{n,1}(a)-c_{n,2}(a)\right) \lambda_n^{s} \equiv 0$$ 
for $\mathrm{Re}(s) \ll 0$. Here, $\lambda_n$ is the same for the two Riemann surfaces, since it only depends on their genus and those have just been shown to be equal in Lemma \ref{genusequal}. Now since all $\lambda_n$ are \emph{distinct}
positive integers, we also have an identically zero Dirichlet series 
$$ \sum_{N \geq 0} \til{c}_N N^s \equiv 0 \mbox{ for $\mathrm{Re}(s) \ll 0$} $$
with $\til{c}_N = c_{n,1}(a)-c_{n,2}(a)$ if $N=\lambda_n$ for some $n$, and
$\til{c}_N=0$ otherwise. Now clearly $\til{c}_N=0$ for all $N$, by the identity theorem for Dirichlet series (e.g., \cite{HW}, 17.1). 
\end{proof} 

\begin{lem} \label{algfunc}
For $a = \chi_{\overrightarrow{\eta}}$  and $w$ a word of length $n<|\eta|$, we have that 
$$ \langle \Psi_w | a \Psi_w \rangle = \mu(\overrightarrow{\eta}) \cdot \kappa$$
where $\kappa$ depends only on measures $\mu(\overrightarrow{v})$ of certain words $v$ of length $|v|<|\eta|$.
\end{lem}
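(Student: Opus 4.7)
The plan is to unwind the Gram--Schmidt recursion (\ref{defPsi})--(\ref{defphi}) that defines $|\Psi_w\rangle$ and to track word-length bookkeeping carefully. First, by induction on $|w|$, the orthonormal vector $|\Psi_w\rangle$ for $w\in I_n$ can be written as a finite linear combination
\[
|\Psi_w\rangle \;=\; \sum_{|u|\le n} c_u\,\chi_{\overrightarrow{u}},
\]
and each scalar $c_u$ is built from the inner products $\langle\chi_{\overrightarrow{u'}}\mid \chi_{\overrightarrow{u''}}\rangle$ and norms that appear during the orthogonalisation. By Lemma \ref{basis} these are nothing but measures $\mu(\overrightarrow{\max\{u',u''\}})$ of words of length $\le n<|\eta|$, so the $c_u$'s are algebraic functions of such shorter measures only.

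Next I would compute $a\,|\Psi_w\rangle$ pointwise:
\[
a\,|\Psi_w\rangle \;=\; \chi_{\overrightarrow{\eta}}\sum_{u} c_u \chi_{\overrightarrow{u}} \;=\; \sum_u c_u\,\chi_{\overrightarrow{\eta}\cap\overrightarrow{u}},
\]
and Lemma \ref{lemwords} identifies each intersection with $\overrightarrow{\eta}$, $\overrightarrow{u}$, or $\emptyset$. The strict inequality $|u|\le n<|\eta|$ rules out the case $\eta\subseteq u$, so only the terms with $u\subseteq \eta$ survive, each contributing $\chi_{\overrightarrow{\eta}}$. Hence $a|\Psi_w\rangle = \beta\,\chi_{\overrightarrow{\eta}}$, where
\[
\beta \;:=\; \sum_{\substack{|u|\le n\\ u\subseteq \eta}} c_u
\]
again depends only on measures of words of length $<|\eta|$.

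Finally, I would expand $\langle\Psi_w\mid \chi_{\overrightarrow{\eta}}\rangle = \sum_u c_u\,\mu(\overrightarrow{\max\{u,\eta\}})$ via Lemma \ref{basis}; the same length constraint forces $\max\{u,\eta\}=\eta$ whenever the contribution is non-zero, and zero otherwise, so this inner product equals $\beta\,\mu(\overrightarrow{\eta})$. Combining the two displays one obtains
\[
\langle\Psi_w\mid a\Psi_w\rangle \;=\; \beta^2\,\mu(\overrightarrow{\eta}),
\]
and one may take $\kappa:=\beta^2$, which manifestly depends only on the measures $\mu(\overrightarrow{v})$ for words $v$ with $|v|<|\eta|$.

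The only delicate point is the length bookkeeping: one needs simultaneously that Gram--Schmidt introduces no $\chi_{\overrightarrow{u}}$ with $|u|>|w|$ into the expansion of $|\Psi_w\rangle$, and that $|w|<|\eta|$ excludes the alternative $\eta\subseteq u$ in Lemma \ref{lemwords}. Both facts are built into the construction, so there is no deeper obstacle beyond matching lengths carefully.
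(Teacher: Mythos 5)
Your proof is correct and follows essentially the same route as the paper's: unwind the Gram--Schmidt recursion to express $|\Psi_w\rangle$ as a combination of $\chi_{\overrightarrow{u}}$ with $|u|\le n$, observe that the coefficients involve only measures of words of length $<|\eta|$, and invoke Lemma \ref{lemwords} together with the length constraint $n<|\eta|$ to reduce each triple intersection with $\overrightarrow{\eta}$ to $\overrightarrow{\eta}$ or $\emptyset$. Your version is slightly more explicit in that you package the result in the closed form $\langle\Psi_w\mid a\Psi_w\rangle=\beta^2\mu(\overrightarrow{\eta})$ (valid since all Gram--Schmidt coefficients are real), whereas the paper stops at showing each pairwise term $\langle\chi_{\overrightarrow{w}}\mid a\chi_{\overrightarrow{u}}\rangle$ equals $\mu(\overrightarrow{\eta})$ or $0$.
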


\begin{proof} This holds for $w$ a word of length one, since by definition (\ref{length1}) and Lemma \ref{basis}, we have
$$  \langle \Psi_w | a \Psi_w \rangle = \frac{\mu(\overrightarrow{w} \cap \overrightarrow{\eta})}{\mu(\overrightarrow{w})}  =\left\{ \begin{array}{ll}  \frac{\mu(\overrightarrow{\eta})}{\mu(\overrightarrow{w})} & \mbox{if } w \subset \eta; \\ 0 & \mbox{otherwise.} \end{array} \right. $$
We then use induction on the word length of $w$. By construction of $\Psi_w$ (looking at the definitions in {Formul\ae}  (\ref{defphi}) and (\ref{defPsi})) it suffices to prove that 
for $w,u$ of length $ \leq n$, we have that $\langle\chi_{\overrightarrow{w}}|a\chi_{\overrightarrow{u}}\rangle$ is of the required form $\mu(\overrightarrow{\eta}) \cdot \kappa$
where $\kappa$ depends only on measures $\mu(\overrightarrow{v})$ of certain words $v$ of length $|v|<|\eta|$: $\Psi_w$ is a linear combination of such terms. Now
$$\langle\chi_{\overrightarrow{w}}|a\chi_{\overrightarrow{u}}\rangle = \mu(\overrightarrow{w} \cap \overrightarrow{\eta} \cap \overrightarrow{u}) =\left\{ \begin{array}{ll}  \mu(\overrightarrow{\eta}) & \mbox{if } w,u \subset \eta \\ 0 & \mbox{otherwise,} \end{array} \right. $$
since $\eta$ is longer than $w$ and $u$. This proves the claim.
\end{proof} 

Computing $c_{m-1}(\chi_{\overrightarrow{\eta}})$ as a linear combination of terms of the form $\langle \Psi_w | a \Psi_w \rangle,$ we find from Lemma \ref{algfunc} (now indicating the representation $\rho$, since we will soon vary it) :
\begin{equation} \label{inductmu} c_{m-1}(\chi_{\overrightarrow{\eta}_\rho}) =  \mu(\overrightarrow{\eta}_{\rho})  \cdot \kappa, \end{equation}
where $\kappa$ only depends on $\mu(\overrightarrow{v}_\rho)$ for $|v|<m$. 

We need one more technical observation, namely that $\kappa \neq 0$ in (\ref{inductmu}) or, what is the same: 
\begin{lem} $c_{m-1}(a) \neq 0$ for $a=\chi_{\overrightarrow{\eta}}$ with $|\eta|=m$. \end{lem}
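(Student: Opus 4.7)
The plan is to identify $c_{m-1}(\chi_{\overrightarrow{\eta}})$ with a manifestly non-negative quantity (a Hilbert--Schmidt norm), and then verify that this quantity is strictly positive by a short computation of two projections in the orthogonal basis of characteristic functions.

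First, I would exploit the identity $(P_n-P_{n-1})|\Psi_w\rangle = \delta_{|w|,n}|\Psi_w\rangle$ (recorded earlier in the computation of $\tr(aD^s)$) to rewrite $c_n(a)$ in the basis-free form $c_n(a)=\tr\bigl(a(P_n-P_{n-1})\bigr)$. Setting $Q := P_{m-1}-P_{m-2}$ with the convention $P_{-1}:=0$, the key observation is that $a=\chi_{\overrightarrow{\eta}}$ acts on $H$ as an orthogonal projection, because $a^2=a=a^{*}$ as a multiplication operator. Combining this with $Q^2=Q=Q^{*}$ and the cyclicity of the trace, one obtains
$$ c_{m-1}(a) \;=\; \tr(aQ) \;=\; \tr(a^{2}Q) \;=\; \tr(aQa) \;=\; \tr\bigl((Qa)^{*}(Qa)\bigr) \;=\; \|Qa\|_{\mathrm{HS}}^{2} \;\ge\; 0. $$
Hence the lemma reduces to showing that $Qa\neq 0$ as an operator on $H$, and since $a\chi_{\overrightarrow{\eta}}=\chi_{\overrightarrow{\eta}}$, it suffices to show $Q\chi_{\overrightarrow{\eta}}\neq 0$, i.e., $P_{m-1}\chi_{\overrightarrow{\eta}}\neq P_{m-2}\chi_{\overrightarrow{\eta}}$.

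Second, I would compute these two projections directly, using the \emph{orthogonal} (not orthonormal) basis $\{\chi_{\overrightarrow{v}}\}_{|v|=k}$ of $H_k$ provided by Lemma \ref{lemwords}, with $\|\chi_{\overrightarrow{v}}\|^{2}=\mu(\overrightarrow{v})$. For $k<m$, the inner product $\langle \chi_{\overrightarrow{v}},\chi_{\overrightarrow{\eta}}\rangle=\mu(\overrightarrow{v}\cap\overrightarrow{\eta})$ vanishes unless $v$ is the length-$k$ prefix $\eta^{(k)}$ of $\eta$, in which case it equals $\mu(\overrightarrow{\eta})$. Hence a one-line Fourier expansion gives
$$ P_{k}\chi_{\overrightarrow{\eta}} \;=\; \frac{\mu(\overrightarrow{\eta})}{\mu(\overrightarrow{\eta^{(k)}})}\,\chi_{\overrightarrow{\eta^{(k)}}} \qquad (0\le k\le m-1), $$
with the convention $\eta^{(0)}=e$, $\overrightarrow{e}=\Lambda$.

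For $m\ge 2$, this exhibits $Q\chi_{\overrightarrow{\eta}}$ as a nonzero linear combination of $\chi_{\overrightarrow{\eta^{(m-1)}}}$ and $\chi_{\overrightarrow{\eta^{(m-2)}}}$, which are linearly independent because their supports differ: $\overrightarrow{\eta^{(m-1)}}\subsetneq \overrightarrow{\eta^{(m-2)}}$. For $m=1$, $P_{-1}=0$ and $P_{0}\chi_{\overrightarrow{\eta}}=\mu(\overrightarrow{\eta})\,\chi_{\Lambda}\neq 0$, since every $\overrightarrow{\eta}$ has positive Patterson--Sullivan measure. In either case $c_{m-1}(a)>0$, and in particular nonzero. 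I do not anticipate any real obstacle: the whole argument turns on the trace manipulation $\tr(aQ)=\|Qa\|_{\mathrm{HS}}^{2}$, which is available for free because $a$ is a projection, and a welcome side-effect of this route is that it bypasses the Gram--Schmidt basis $\{\Psi_{w}\}$ entirely, avoiding any case analysis according to whether the ancestors of $\eta$ happen to lie in the index sets $I_{\bullet}$.
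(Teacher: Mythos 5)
Your proof is correct and takes a genuinely different route from the paper's. The paper works in the Gram--Schmidt basis $\{\Psi_w\}$: each summand $\langle\Psi_w | a\Psi_w\rangle=\int|\Psi_w|^2\chi_{\overrightarrow{\eta}}\,d\mu$ is $\geq 0$, and the paper then tries to locate a point $x\in\overrightarrow{\eta}$ and a $w\in I_{m-1}\setminus I_{m-2}$ with $\Psi_w(x)\neq 0$ by expanding some $\chi_{\overrightarrow{v}}$ in those $\Psi_w$. You instead pass to the coordinate-free expression $c_{m-1}(a)=\tr(aQ)$ with $Q=P_{m-1}-P_{m-2}$, note that $a$ and $Q$ are projections and that $aQ$ is finite rank, so cyclicity legitimately gives $c_{m-1}(a)=\tr\bigl((Qa)^{*}Qa\bigr)=\|Qa\|_{\mathrm{HS}}^2\geq 0$, and reduce to $Q\chi_{\overrightarrow{\eta}}\neq 0$ (most cleanly seen by applying $Qa$ to the cyclic vector $\chi_\Lambda$). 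You then verify this nonvanishing by a Fourier expansion of $\chi_{\overrightarrow{\eta}}$ against the orthogonal basis $\{\chi_{\overrightarrow{v}}\}_{|v|=k}$ of $H_k$, which yields the closed formula $P_k\chi_{\overrightarrow{\eta}}=\mu(\overrightarrow{\eta})\,\mu(\overrightarrow{\eta^{(k)}})^{-1}\chi_{\overrightarrow{\eta^{(k)}}}$, and hence $Q\chi_{\overrightarrow{\eta}}$ is a nonzero combination of two linearly independent characteristic functions (for $m\geq 2$), or a nonzero multiple of $\chi_\Lambda$ (for $m=1$); here you also correctly use that the Patterson--Sullivan measure gives positive mass to every clopen cylinder $\overrightarrow{v}$. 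This route buys you three things: it avoids the index sets $I_n$ and the orthonormal vectors $\Psi_w$ entirely; it gives strict positivity with an explicit lower bound rather than mere nonvanishing; and it quietly sidesteps a small gap in the paper's own proof, which expands $\chi_{\overrightarrow{v}}=\sum_{w\in I_{m-1}\setminus I_{m-2}}a_w\Psi_w$ even though $\chi_{\overrightarrow{v}}\in H_{m-1}$ is not orthogonal to $H_{m-2}$ (for instance $\langle\chi_{\overrightarrow{v}}|\chi_\Lambda\rangle=\mu(\overrightarrow{v})\neq 0$ when $m\geq 2$), so that expansion should run over all of $I_{m-1}$ and the existence of a $w$ of \emph{exact} length $m-1$ with $\Psi_w(x)\neq 0$ needs an extra step. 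Your projection computation avoids this issue altogether.
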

\begin{proof}
Recall $c_{m-1}(a) = \sum\limits_{w \in I_{m-1}-I_{m-2}} \langle \Psi_w | a \Psi_w \rangle.$ The terms $$ \langle \Psi_w | a \Psi_w \rangle = \int |\Psi_w|^2 \cdot \chi_{\overrightarrow{\eta}} \, \, d\mu \geq 0$$ are all positive, but some might be zero. It therefore suffices to prove that at least one of them is non-zero, and for this, it suffices to find $w$ such that the support of $\Psi_w$ intersects $\overrightarrow{\eta}$. But if $x \in \overrightarrow{\eta}$, then there is a word $v$ of length $m-1$ such that $x \in \overrightarrow{v}$, too, since $$\bigcup\limits_{|v|=m-1} \overrightarrow{v} = \Lambda.$$ Then $\chi_{\overrightarrow{v}}(x)  \neq 0$, but,  as $\{\Psi_w\}_{w \in I_{m-1}-I_{m-2}}$ is a basis for $H_{m-1} \ominus H_{m-2}$, we have $$\chi_{\overrightarrow{v}}(x) = \sum_{w \in I_{m-1}-I_{m-2}} a_w \Psi_w(x)$$ for some coefficients $a_w$, hence there exists $w$ of length $m-1$ such that $\Psi_w(x) \neq 0$. 
\end{proof}

\begin{prop}
For all $\eta \in F_g$, $\mu_1(\overrightarrow{\eta}_{\rho_1}) =
\mu_2(\overrightarrow{\eta}_{\rho_2})$.  
\end{prop}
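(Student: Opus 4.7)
The plan is to proceed by induction on the word length $m = |\eta|$, combining Lemma~\ref{equal} (which gives equality of the coefficients $c_{n}$) with the key formula~(\ref{inductmu}) and the non-vanishing of $\kappa$ established in the previous lemma.

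The base case $m=0$ is trivial: the only word of length zero is $\eta=e$, for which $\overrightarrow{e}_\rho = \Lambda$, and the Patterson--Sullivan measure is the measure of the state $\tau$, hence normalised so that $\mu_1(\Lambda_1) = \mu_2(\Lambda_2) = 1$.

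For the inductive step, fix $\eta \in F_g$ of length $m \geq 1$ and assume $\mu_1(\overrightarrow{v}_{\rho_1}) = \mu_2(\overrightarrow{v}_{\rho_2})$ for every $v$ with $|v| < m$. Set $a_j := \chi_{\overrightarrow{\eta}_{\rho_j}}$; these are the same element of the common algebra $A_\infty$ under the identification induced by the Floyd homeomorphism $\Phi \colon \Lambda_1 \to \Lambda_2$. Applying (\ref{inductmu}) for each $j=1,2$ gives
$$ c_{m-1,j}(a_j) \;=\; \mu_j(\overrightarrow{\eta}_{\rho_j}) \cdot \kappa_j, $$
where $\kappa_j$ is a fixed algebraic expression in the measures $\mu_j(\overrightarrow{v}_{\rho_j})$ of words of length strictly less than $m$. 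By the inductive hypothesis these shorter measures coincide across $j$, so $\kappa_1 = \kappa_2 =: \kappa$. Lemma~\ref{equal} gives $c_{m-1,1}(a_1) = c_{m-1,2}(a_2)$, and the preceding lemma guarantees $\kappa \neq 0$; dividing through yields $\mu_1(\overrightarrow{\eta}_{\rho_1}) = \mu_2(\overrightarrow{\eta}_{\rho_2})$, closing the induction.

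The only real subtlety is justifying that $\kappa_1$ and $\kappa_2$ are given by the \emph{same} universal function of the shorter-word measures. This is inherent in the construction of the basis: the orthonormal vectors $\{|\Psi_w\rangle\}_{w \in I_\infty}$ are produced by the Gram--Schmidt formulae~(\ref{defPsi})--(\ref{defphi}) applied to the characteristic functions $\chi_{\overrightarrow{w}}$, whose inner products $\langle \chi_{\overrightarrow{u}} | \chi_{\overrightarrow{w}} \rangle = \mu(\overrightarrow{\max\{u,w\}})$ (Lemma~\ref{basis}) depend only on measures of words of length at most the relevant $n$. The combinatorial index sets $I_n$ depend only on the common genus (Lemma~\ref{genusequal}), so the Gram--Schmidt coefficients are identical rational expressions in the $\mu$-values on both sides; unwinding $\langle \Psi_w | a \Psi_w \rangle$ via Lemma~\ref{algfunc} and summing over $w \in I_{m-1} - I_{m-2}$ then exhibits $\kappa$ as a single universal expression, as required.
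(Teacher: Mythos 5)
Your proof is correct and follows essentially the same route as the paper's: induction on $|\eta|$, applying formula~(\ref{inductmu}) on both sides, invoking Lemma~\ref{equal} for the equality of $c_{m-1}$, the non-vanishing of $\kappa$ from the preceding lemma, and the inductive hypothesis to match the shorter-word data. Your final paragraph spells out more carefully than the paper does why $\kappa_1$ and $\kappa_2$ are the same universal expression in the shorter measures (namely, because the index sets $I_n$ and the Gram--Schmidt coefficients depend only on the genus and on measures of strictly shorter words), which is a worthwhile clarification of a point the paper treats as implicit.
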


\begin{proof} We prove this by induction on the word length of $\eta \in F_g$
If $|\eta|=0$, we find $\overrightarrow{\eta}_{\rho_i}=\Lambda_i$ for $i=1,2$, so the identity holds. If it holds for all words of length $<m$, let $\eta$ denote a word of length $m$. 

Recall that the map $\Phi^*: A_1 \rightarrow A_2$ is such that
$\Phi^*(\chi_{\overrightarrow{w}_{\rho_1}} )=
\chi_{\overrightarrow{w}_{\rho_2}}$. 

We apply the expression (\ref{inductmu}) to both Riemann surfaces, substituting $\rho=\rho_1$ and $\rho=\rho_2$, respectively. Since $\kappa \neq 0$, this is a genuine formula for $\mu(\overrightarrow{\eta}_\rho)$. 
The equality $c_{m-1,1}(\chi_{\overrightarrow{\eta}_{{\rho_1}}})=c_{m-1,2}(\chi_{\overrightarrow{\eta}_{{\rho_2}}})$ is our assumption, and for the second factor on the right hand side we can inductively assume that the measures on the occuring words  of length $<m$ agree in both representations. Hence we find indeed $\mu(\overrightarrow{\eta_{\rho_1}})=\mu(\overrightarrow{\eta_{\rho_2}})$.
\end{proof}
This proposition finishes the proof of Theorem \ref{zetalambda}.
\end{proof}

\section{Rigidity from boundary isometry} 

We now prove Theorem \ref{tB} from the introduction: 

\begin{thm}
If $X_1$ and $X_2$ are compact Riemann surfaces of respective genus
$g_1, g_2 \geq 2$, such that  
$\zeta_{X_1,a}(s)=\zeta_{X_2,a}(s)$ for all $a \in A$, then $X_1$ and
$X_2$ are conformally or anti-conformally equivalent as Riemann
surfaces.\end{thm}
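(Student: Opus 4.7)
\bigskip\noindent
\textbf{Proof proposal.} The plan is to feed the output of Theorem \ref{zetalambda} into an ergodic rigidity theorem for Schottky uniformizations. By Theorem \ref{zetalambda}, the hypothesis forces $g_1=g_2=:g$ and, via the Floyd homeomorphism $\Phi:\Lambda_1\to\Lambda_2$ induced by $\iota_{\rho_2}\circ\iota_{\rho_1}^{-1}$,
$$\mu_1(\overrightarrow{\eta}_{\rho_1})=\mu_2(\overrightarrow{\eta}_{\rho_2})\qquad\text{for every }\eta\in F_g.$$
Since $\Phi(\overrightarrow{\eta}_{\rho_1})=\overrightarrow{\eta}_{\rho_2}$ and the cylinder sets $\overrightarrow{\eta}_{\rho_2}$ form a countable basis of clopen sets generating the Borel $\sigma$-algebra of $\Lambda_2$, a standard monotone-class argument promotes this equality of measures on a generating algebra to $\Phi_*\mu_1=\mu_2$ as Borel measures. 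In particular, $\Phi_*\mu_1\ll\mu_2$ (with constant Radon--Nikodym derivative equal to $1$), so $\Phi$ is absolutely continuous with respect to the Patterson--Sullivan measures.

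Next I would invoke the ergodic rigidity theorem for Schottky uniformizations announced in the introduction (after Yue \cite{Yue}, in the tradition of Mostow, Kuusalo, Bowen, Sullivan and Tukia). The boundary map $\Phi$ is tautologically equivariant with respect to the abstract group isomorphism $\rho_2\circ\rho_1^{-1}\colon\Gamma_1\to\Gamma_2$, and the Fenchel--Nielsen/Tukia theory in \cite{Tukia2} identifies it as the unique such equivariant homeomorphism. The rigidity dichotomy states that exactly one of the following holds: either $\Phi_*\mu_1$ and $\mu_2$ are mutually singular, or $\Phi$ is the restriction to $\Lambda_1$ of a continuous automorphism of $\P^1(\C)$, namely an element $\gamma$ of $\PGL(2,\C)$ possibly post-composed with complex conjugation.

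The previous paragraph ruled out the singular alternative, so $\Phi$ extends to such a $\gamma$. By equivariance, $\gamma$ conjugates $\Gamma_1$ into $\Gamma_2$ (or $\overline{\Gamma}_1$ into $\Gamma_2$), hence it maps $\Lambda_{\Gamma_1}$ to $\Lambda_{\Gamma_2}$ and the domain of discontinuity $\P^1(\C)-\Lambda_{\Gamma_1}$ to $\P^1(\C)-\Lambda_{\Gamma_2}$ equivariantly with respect to the group isomorphism. Passing to quotients yields the desired conformal (respectively anti-conformal) isomorphism $X_1\cong X_2$.

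The main obstacle is to formalize the rigidity dichotomy in precisely the form needed. Yue's theorem is stated in a convex-cocompact hyperbolic setting, whereas here we must apply it to a Schottky-equivariant boundary map between limit sets in $\P^1(\C)$, and we must check that the "either singular or Möbius" alternative is indeed the one that follows from Yue's result (as opposed to only a weaker conclusion such as quasiconformality). This typically requires verifying the relevant hypotheses on the group isomorphism (type-preservation is automatic for Schottky groups, which are purely loxodromic and free), and then upgrading from the quasiconformal conclusion to a Möbius one via Mostow-type rigidity arguments applied to the three-dimensional hyperbolic handlebody uniformized by $\Gamma$. Once this is set up carefully, the chain from "equal zeta functions" to "(anti-)conformal equivalence" closes.
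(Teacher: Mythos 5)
Your overall strategy matches the paper exactly: feed the measure equality from Theorem \ref{zetalambda} into an ergodic rigidity theorem (Yue) to get a Möbius/anti-Möbius extension of the boundary map, then descend to the quotients. The first half of your argument (monotone class argument giving $\Phi_*\mu_1=\mu_2$, hence absolute continuity) is fine and is essentially what the paper does, except the paper also notes $\Phi$ is $\alpha$-equivariant, which is needed for Yue's theorem.

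The gap is concentrated in exactly the spot you flag, and your worry there is partly misdirected. Yue's Corollary B does \emph{not} output ``$\Phi$ extends to a Möbius transformation of $\P^1(\C)$''; it outputs ``$\alpha$ extends to a continuous homomorphism $G_1\to G_2$,'' under hypotheses you must verify: the $\Gamma_i$ geometrically finite in simple connected adjoint rank-one groups, $\Gamma_1$ Zariski dense, $\alpha$ type-preserving, and the \emph{equivariant} boundary homeomorphism absolutely continuous. The paper verifies each of these, and two of the verifications are not automatic. First, Zariski density of a noncommutative Schottky group in $\PGL(2,\C)$ requires an argument (the Zariski closure is an algebraic subgroup; proper connected subgroups of $\PGL(2)$ of dimension $\le 2$ are solvable, which is incompatible with containing a rank-$g$ free group). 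Second, one needs the Floyd map $\Phi$ to coincide with the canonical boundary map $\phi$ in Yue's theorem; the paper gets this from uniqueness of equivariant boundary homeomorphisms in the absence of parabolics (Tukia). Third — and this replaces your proposed ``quasiconformal to Möbius via Mostow'' step, which is not needed here — having a continuous automorphism of $\PGL(2,\C)$, the paper invokes the Schreier--van der Waerden classification: outer automorphisms come from field automorphisms of $\C$, and continuous ones fix $\R$. That is what produces the dichotomy ``element of $\PGL(2,\C)$, possibly post-composed with complex conjugation'' that you wrote down but did not derive. Once those three points are filled in, your argument closes exactly as the paper's does.
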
  

\begin{proof} From Theorem \ref{zetalambda}, we find that $X_1$ and
$X_2$ have the same genus $g \geq 2$.  
We consider the two Schottky groups $\Gamma_1$ and $\Gamma_2$
corresponding to $X_1$ and $X_2$, respectively. Let $\rho_i :  F_g
\hookrightarrow \PGL(2,\C)$ denote the corresponding embeddings of the
abstract group $F_g$ (so $\rho_i(F_g)=\Gamma_i$), and let
$\alpha:=\rho_2 \circ \rho_1^{-1}$ denote the induced group
isomorphism $\Gamma_1 \rightarrow \Gamma_2$. We consider the map $\Phi
: \Lambda_1 \rightarrow \Lambda_2$ as in the diagram of the proof of
\ref{zetalambda}: $x\in \Lambda_1$ can be written as
$\iota_{\rho_1}(\lim w_i)$ for some Cauchy sequence $\lim w_i$ in
$Y_g$. We then define $\Phi(x):=\iota_{\rho_2}(\lim w_i)$. As was
remarked before, by Floyd's Lemma \ref{Floyd}, $\Phi$ is a
homeomorphism of $\Lambda_1$ onto $\Lambda_2$.  

\begin{lem} $\Phi$ is $\alpha$-equivariant. 
\end{lem}
\begin{proof} Given $\gamma \in \Gamma_1$, we can find $g \in F_g$
such that $\gamma=\rho_1(g)$. For $x = \iota_{\rho_1}(\lim w_i)$, we
find that $\gamma \cdot x = \iota_{\rho_1}(\lim gw_i)$. Hence
$$\Phi(\gamma \cdot x) = \iota_{\rho_2}(\lim gw_i) = \iota_{\rho_2}(g)
\cdot \iota_{\rho_2}(\lim w_i) = \rho_2(\rho_1^{-1}(\gamma)) \cdot
\Phi(x) = \alpha(\gamma) \Phi(x).$$ So we do find that $\Phi$ is
$\alpha$-equivariant. \end{proof}  

This means that $\Phi$ is a boundary homeomorphism in the sense of
Fenchel-Nielsen, see Tukia \cite{Tukia2}, 3C.  

By  Theorem \ref{zetalambda}, the equality of zeta functions implies
that $\Phi$ is an isometry. Indeed,  
$$\mu_2(\Phi^*(\chi_{\overrightarrow{w}_{\rho_1}})) =
\mu_2(\chi_{\Phi(\overrightarrow{w}_{\rho_1})})= 
\mu_2(\chi_{\overrightarrow{w}_{\rho_2}})=
\mu_1(\chi_{\overrightarrow{w}_{\rho_1}}),$$
where we use the definition of $\Phi$ in the second equality and the
proposition in the third. Thus, since
$\{\overrightarrow{w}_{\rho_i}\}$ is a basis for $\Lambda_i$, we find
that $\mu_2 \circ \Phi^* = \mu_1$.  

Now recall the following ergodic rigidity theorem: 

\begin{lem}[Chengbo Yue] Let $\Gamma_1$ and $\Gamma_2$ be
geometrically  finite subgroups in two simple, 
connected and adjoint Lie groups $G_1$ and $G_2$ of real rank one,
such that $\Gamma_1$ 
is Zariski dense in $G_1$. Let  $\alpha : \Gamma_1 \rightarrow
\Gamma_2$ be a type-preserving isomorphism. 
Then there exists a homeomorphism  $\phi: \Lambda_{\Gamma_1}
\rightarrow \Lambda_{\Gamma_2}$ which is equivariant 
with respect to  $\alpha$. If $\phi$ is absolutely continuous with
respect to the Patterson--Sullivan measure, then  $\alpha$ can be
extended to a continuous homomorphism $G_1 \rightarrow G_2$.\end{lem}

\begin{proof} This is literally Corollary B from  \cite{Yue}, apart
from the fact that the extended homomorphism $G_1 \rightarrow G_2$ can
be assumed \emph{continuous}, but this follows by looking at the
statement of Theorem A from which the corollary follows. \end{proof} 

We want to apply this corollary with $G_1=G_2=\PGL(2,\C)$ and
$\Gamma_i$ our Schottky groups, so let us check the conditions:  
Both Schottky groups are geometrically finite subgroups of $\PGL(2,\C)$;
$\PGL(2,\C)=\PSL(2,\C)$ is a simple and connected adjoint
real-rank-one Lie group; 
and finally:

\begin{lem} A noncommutative Schottky group is Zariski dense in $\PGL(2,\C)$. 
\end{lem}

\begin{proof} Since the group operations on such a Schottky group are
induced from the algebraic operations on the algebraic group
$\PGL(2)$, the Zariski closure $\hat{\Gamma}$ is itself an algebraic
subgroup of $\PGL(2)$. Assume $\hat{\Gamma}$ is a strict algebraic
subgroup of $\PGL(2)$. Let $\hat{\Gamma}_0$ denote its connected
component of the identity. The group of connected components
$\hat{\Gamma}/\hat{\Gamma}_0$ is finite, and $\Gamma \cap
\hat{\Gamma}_0$ 
is a finite index subgroup of the free group $\Gamma$, hence free of
the same rank $g$; and its Zariski closure is connected. It suffices
that this group has full Zariski closure, hence we can assume without
loss of generality that $\hat{\Gamma}$ is connected. However, if
$\hat{\Gamma}$ is connected of dimension $\leq 2$, then it is solvable
(cf.\ \cite{LAG}, IV.11.6), and a solvable group cannot contain a free
group of rank $g \geq 2$ (since the composition series of
$\hat{\Gamma}$ would descend to one for $\Gamma$). On the other hand,
if $\dim \hat{\Gamma} = 3$, then since $\PGL(2)$ is connected, we have
$\hat{\Gamma}=\PGL(2)$. \end{proof} 

Since $F_g$ has no parabolic points, we know that the equivariant
boundary homeomorphism $\Phi$ is unique and type-preserving (Tukia,
\cite{Tukia1}, p.\ 426), hence it coincides with the boundary
homeomorphism $\phi$ in Yue's result. Since all conditions are
satisfied, we can apply the result (replacing $\phi$ by $\Phi$) to
both the isometry $\Lambda_1 \rightarrow \Lambda_2$ and its inverse,
we find that $\alpha$ extends to a continuous group automorphism
$\PGL(2,\C) \rightarrow \PGL(2,\C)$. Now recall that the automorphisms
of $\PGL(2,k)$ over a field $k$ have been classified by Schreier and
van der Waerden (cf.\ \cite{SW}, see also the supplement to
\cite{Hua}): the outer automorphisms are induced from field
automorphisms of $k$. Now all  continuous field automorphisms of $\C$
fix $\R$. 

We conclude that there is an isomorphism $\Gamma_1 \rightarrow
\Gamma_2$ of the form $$\gamma_1 \rightarrow g \gamma_1^\sigma
g^{-1}$$ for $g \in \PGL(2,\C)$ and $\sigma \in \Aut(\C/\R)$, that is,
$\Gamma_1$ and $\Gamma_2^\sigma$ are conjugate in $\PGL(2,\C)$. Now
note that $\Gamma_2^\sigma$ uniformizes the curve $X_2^\sigma$: 
$$ (\P^1(\C)-\Lambda_{\Gamma_2^\sigma})/ \Gamma_2^\sigma = \left(
(\P^1(\C)-\Lambda_{\Gamma_2})/ \Gamma_2\right)^\sigma = X_2^\sigma.$$ 
 Hence $X_1$ and $X_2^\sigma$ are isomorphic Riemann surfaces, so
$X_1$ and $X_2$ are conformally or anti-conformally equivalent, the
former case arising when $\sigma$ is trivial, and the latter case
arising when $\sigma$ is complex conjugation. 
\end{proof}

\begin{rem}
The statement that two hyperbolic compact Riemann surfaces of the same
genus are isomorphic if and only if the boundary map $S^1 \rightarrow
S^1$ induced from the isomorphism of their fundamental groups, seen as
Fuchsian groups of the first kind, is absolutely continuous in
Lebesgue measure is originally part of Mostous rigidity theorem (cf.\
Mostow \cite{Mostow} 22.14, p.\ 178). An easy proof for this case is
in Kuusalo \cite{Kuusalo}, and  Bowen has given another proof using
Gibbs measures in \cite{Bowen}. For more general M\"obius groups
(whose limit set is not necessarily the full boundary of the symmetric
space, such as Schottky groups, and in higher dimensions), there is
the work of Sullivan (\cite{Sullivan}) and Tukia (\cite{Tukia1},
\cite{Tukia2}). The typical ergodic rigidity theorem in this setting
is that an absolutely continuous boundary map is identical to the
restriction of a M\"obius transformation \emph{on the limit set}. What
happens outside the limit set, however, depends on other
considerations (cf. \cite{Tukia2}, Marden \cite{Marden}). See, e.g.,
Tukia (\cite{Tukia1}), Section 4D for a 3-dimensional example of an
isomorphism of Schottky groups with absolutely continuous boundary
map, that does \emph{not} extend to a M\"obius map outside the limit
set.\end{rem} 

\begin{rem}
The construction cannot be extended to the crossed product boundary
operator algebra $C(\Lambda) \rtimes \Gamma$, since Connes
hyperfiniteness result \cite{Conneshyper} and the hyperbolic growth of
the group $\Gamma$ ($g \geq 2$) prevent this algebra from carrying
finitely summable spectral triples.  
However, $C(\Lambda) \rtimes \Gamma$ can 
be identified with a Cuntz--Krieger algebra (\cite{CK}), which has a
standard AF-subalgebra $\til{A}$. This has a maximal abelian
subalgebra that can be identified with our algebra $A$ (\cite{CK},
2.5). One can construct a conditional expectation $E$ from $\til{A}$
to $A$. Thus,  the construction of the spectral triple extends to this
AF-algebra $\til{A}$ by using an \emph{unfaithful} state $\tau \circ
E$ that is zero outside $A$ and equals our state $\tau$ on $A$. Since
this construction, however, is just a ``factorisation'' through the
above commutative spectral triple, it doesn't give a lot of new
information. 
\end{rem}

\section{Remarks and Questions} 

\begin{se} 
 An interesting question (e.g., in the light of Arakelov geometry) is
how to generalize the result to the case of $p$-adic Mumford curves
(\cite{GvdP}, \cite{Mumford}). See \cite{CMRV} for some results on
trees, inspired by earlier work on non-finitely summable spectral
triples for tree actions by Consani and Marcolli
\cite{ConsaniMarcolli}. 
\end{se}
\begin{se}  Is there such a theory for Fuchsian uniformisation instead
of Schottky uniformisation? In the noncompact case? 
\end{se}
\begin{se} There are other ways of looking at Riemann surfaces from
the point of view of non-commutative geometry. For example, as a
commutative conformal manifold, it carries the non-commutative
differential geometry of ``quantized calculus'', whose Fredholm module
determines the conformal isomorphism type of the surface (Connes,
Donaldson, Sullivan, N.~Teleman \cite{Connesbook},
IV.4.$\alpha$). Also, Consani and Marcolli (\cite{CMinf}) have
constructed $\theta$-summable, but non finitely summable spectral
triples from the boundary action, where the Hilbert space is a
symmetrized version of the $L^2$-space of the boundary that should
have Hausdorff dimension $<1$. There is the work of B\"ar
(\cite{Baer}) mentioned before. All of these constructions are rather
different from the one in this paper. In particular, our spectral
triple is finitely summable, so better suited to tools such as the
local index formula of Connes and Moscovici (\cite{CoMo}).  
Nevertheless, the question arises whether the Consani-Marcolli
spectral triple hears the conformal shape of the Riemann surface.  B\"ar's spectral tiple does enjoy this property.
\end{se}
\begin{se}  Does our spectral triple $\cS_X$ carry a real structure?
Does it then arise from a commutative spin manifold (\cite{GVF},
\cite{RV})? Can one enhance $\cS_X$ by additional classical structure,
so this structure determines the conformal structure of $X$ completely
(not just conformal or anti-conformal)? 
Notice that in our construction we work only with the absolute value
of the Dirac operator, and we use zeta functions that do not see the
sign. In order to relate it to spectral triples coming from a
Riemannian manifold one would need to first enrich it with a sign
(which gives the fundamental class in $K$-homology) and then with a
compatible real structure. It seems that the information on the sign
will be needed to reconstruct completely the conformal structure. 
\end{se}

\begin{se}  Since special values of the spectral zeta functions are
just 0-Hochschild homology, it is interesting to compute higher
characteristic classes of the spectral triple and relate them to the
actual geometry of the original Riemann surface. This is especially
tempting in arithmetically interesting cases, such as modular curves. 
\end{se}
\begin{se}  Can Theorem \ref{tB} be extended to an injective functor
from the category of Riemann surfaces with conformal and
anti-conformal morphisms to a category of spectral triples? 
\end{se}



\begin{small}
\bibliographystyle{plain}
\bibliography{zetas}
\end{small}


\finalinfo 

\end{document}